\newcommand{\hl}[1]{#1}
\newcommand{\Prob}{\ensuremath{\mathbb{P}}}
\newcommand{\Ex}{\ensuremath{\mathbb{E}}}
\newcommand{\transp}[1]{{#1}^{\scriptscriptstyle{\mathsf{T}}}}
\newcommand{\argmax}{\mathop{\rm arg\,max}}
\newcommand{\argmin}{\mathop{\rm arg\,min}}
\DeclareMathOperator\sgn{sgn}
\providecommand{\natexlab}[1]{#1}
\theoremstyle{plain}
\newtheorem{theorem}{Theorem}
\newtheorem{lemma}{Lemma}
\newtheorem{proposition}{Proposition}
\theoremstyle{definition}
\newtheorem{definition}{Definition}
\newtheorem{assumption}{Assumption}
\title{A fast-pivoting algorithm for Whittle's restless bandit index}
\author{Jos\'e Ni\~no-Mora\\
Departamento de Estad\'{\i}stica \\ Universidad Carlos III de Madrid\\
28903 Getafe (Madrid), Spain\\
\texttt{jose.nino@uc3m.es}}
\date{}
\begin{document}

\maketitle

\begin{abstract}
The Whittle index for restless bandits (two-action semi-Markov decision processes) provides an intuitively appealing optimal policy for controlling a single generic project that can be active (engaged) or passive (rested) at each decision epoch, and which can change state while passive. It further provides a practical heuristic priority-index policy for the computationally intractable multi-armed restless bandit problem, which has been widely applied over the last three decades in multifarious settings, yet mostly restricted to project models with a one-dimensional state.
This is due in part to the difficulty of establishing indexability (existence of the index) and of computing the index for projects with large state spaces. 
This paper draws on the author's prior results on sufficient indexability conditions and an adaptive-greedy algorithmic scheme for restless bandits to 
obtain a new fast-pivoting algorithm that computes the $n$ Whittle index values of an $n$-state restless bandit  by performing, after an initialization stage, $n$ steps that entail $(2/3) n^3 + O(n^2)$ arithmetic operations. This algorithm also draws on the parametric simplex method, and is based on elucidating the pattern of parametric simplex
 tableaux, which allows to exploit special structure to substantially simplify and reduce the complexity of simplex pivoting steps.
A numerical study demonstrates
 substantial runtime speed-ups versus alternative algorithms.
\end{abstract}

\noindent\textbf{Keywords:} restless bandits; Whittle index; stochastic scheduling; index policies; indexability; index algorithm; Markov decision processes

\noindent\textbf{MSC (2020):} 90C40,  90B36, 90C39, 90B18, 68M18

\medskip
\noindent\textbf{Note:} Published in \emph{Mathematics} \textbf{7}, 2226 (2020). DOI:\ \url{https://doi.org/10.3390/math8122226}

\section{Introduction}
\label{s:intro}
We consider a general two-action{\hl{ (1: engaged/active; 0: rested/passive) semi-Markov decision process (SMDP) restless bandit}} model
 (see, for example, (\cite{put94}, Ch.\ 11) and~\cite{whit88})
of a dynamic and stochastic \emph{project},    
 whose
 state
 $X(t)$ moves over continuous time $t \in [0, \infty)$
across the \emph{state space} $\mathcal{N}$, which is assumed finite consisting of $n \triangleq |\mathcal{N}|$ states.
At each of
an increasing sequence of \emph{decision epochs} 
$\{t_k\}_{k=0}^\infty$ starting at $t_0 = 0$ and with $t_k \nearrow \infty$ as $k \nearrow \infty$,
the \emph{embedded state} $X_k \triangleq
X(t_k)$ is observed and then an \emph{action}
$A_k \triangleq A(t_k) \in \{0, 1\}$ is chosen, which remains fixed over the subsequent \emph{stage} $[t_k, t_{k+1})$, so $A(t) = A(t_k)$ for $t \in [t_k, t_{k+1})$.
Note that the project state $X(t)$ can change within such stages, and~that processes $X(t)$ and 
$A(t)$ are piecewise constant and~right-continuous.

When the project is in state $X(t) = i$ under action $A(t)  = a$, it accrues
rewards and consumes a generic resource at rates $R_i^a$ and $Q_i^a$ per unit time, respectively, which are exponentially discounted with rate $\alpha >0$. Resource consumption rates are assumed to satisfy  the natural requirement that $0 < Q_i^1 \geqslant Q_i^0 \geqslant 0$, so when the project is active, it consumes resources at a positive rate, which is not lower than when it is~passive.

To select actions, a
\emph{policy} $\pi$ is adopted from the class $\Pi$ of 
history-dependent randomized policies, 
which base the choice of action at each decision epoch $t_k$ on the history 
$\mathcal{H}_k \triangleq \{(X_{k'}, A_{k'})\colon 0 \leqslant k' < k\} \cup \{X_k\}$
of embedded states and~actions.

Suppose that the amount of resource consumed by the project must be paid for at the unit price $\lambda$, so, writing as  $\Ex_i^\pi[\cdot]$ the
expectation starting from $X(0) = i$ under $\pi$,  
\[
V_{i}^\pi(\lambda) \triangleq \Ex_i^\pi\Big[\int_{0}^\infty \big(R_{X(t)}^{A(t)} - \lambda Q_{X(t)}^{A(t)}\big) e^{-\alpha t} \, dt\Big]
\]
is the \emph{$($expected total discounted$)$ project value} starting from $i$ under $\pi$, and~
$V_{i}^*(\lambda) \triangleq \sup_{\pi \in \Pi} \, V_{i}^\pi(\lambda)$ is the 
\emph{optimal project value} starting from state $i$.
Consider now the \emph{$\lambda$-price problem}
\begin{equation}
\label{eq:nuwpro}
(P_\lambda)\colon \quad \textup{find} \enspace \pi_\lambda^* \in \Pi \enspace \textup{such that} \enspace V_{i}^{\pi_\lambda^*}(\lambda)  = V_{i}^*(\lambda) \enspace \textup{for every state} \enspace i \in \mathcal{N},
\end{equation}
which entails finding a policy that maximizes the project value for every initial state. We will 
call \emph{$P_\lambda$-optimal} a policy $\pi_\lambda^*$ solving the $\lambda$-price problem $(P_\lambda)$ in (\ref{eq:nuwpro}).

Under mild assumptions, it is well-known from the theory of finite-state and -action SMDPs (\mbox{see (\cite{put94}, Ch.\ 11)}) that, to~solve  problem (\ref{eq:nuwpro}), it suffices to consider the class $\Pi^{\textup{SD}}$ of 
\emph{stationary deterministic policies}, which prescribe a fixed action at each decision epoch based on the current project state. For~any given resource price $\lambda$, 
a $P_\lambda$-optimal policy in $\Pi^{\textup{SD}}$ can be computed by
classic algorithms, such as value or policy~iteration. 

Instead, we will treat the resource price $\lambda$ as a parameter, and~consider a solution approach for the 
parametric collection $\mathcal{P}$ of \emph{all} $\lambda$-price problems $(P_\lambda)$ as $\lambda \in \mathbb{R}$.
Thus, let us say that the project, or~more precisely, the parametric problem collection $\mathcal{P}$, is \emph{indexable} if, for every project state $i$, there exists a finite critical price $\lambda_i^*$ such that, for~any problem $(P_\lambda)$ and at a decision epoch in which the project is in state $i$: 
(i) it is optimal to \emph{engage} the project if, and only if, $\lambda_i^* \geqslant \lambda$; and (ii) it is optimal to \emph{rest} the project if, and only if, $\lambda_i^* \leqslant \lambda$. Hence, both actions will be optimal in state $i$ if, and only if,  $\lambda_i^* = \lambda$.

We will refer to the mapping $i \mapsto \lambda_i^*$ as the project's \emph{Whittle index}, since it was Whittle who introduced such a concept in~\cite{whit88}, in~a Markovian setting with resource consumption $Q_i^a \triangleq a$. As~for the extension to general resource consumption $Q_i^a$, it was introduced in~\cite{nmmp02}.
In the case, $Q_i^a \triangleq a$, the~Whittle index extends the \emph{Gittins index} for classic (non-restless) bandit projects, which do not change state while passive. 
Thus, strictly speaking, the~index considered herein for general $Q_i^a$ is a \emph{generalized Whittle index}, which reduces to a 
\emph{generalized Gittins index} in the non-restless~case.

Considering a semi-Markov instead of a purely Markovian setting significantly expands the modeling power of the resulting restless bandits, as~in some applications, the time during which a project remains engaged before the next decision can be made may follow a general distribution. 
As an example (see~\cite{nmmor06}), imagine that a ``project'' represents a queue of jobs with Poisson arrivals and generally distributed service times, where serving a job (active action) is non-preemptive, \mbox{i.e.,~it cannot} be interrupted once started. In~\cite{fuetal16}, semi-Markov restless bandits are used to model 
dynamic job assignment for energy-efficient server farms.
Semi-Markov restless bandits can also be used to model classic (non-restless) bandits where changing from engaging one project to another entails project-dependent switching times, as~shown in~\cite{nmvaluet207}.

Besides its intrinsic interest for solving the aforementioned single-project parametric problem collection $\mathcal{P}$, Whittle proposed in~\cite{whit88} to use the index $\lambda_i^*$ as the basis of a widely popular heuristic for the \emph{multi-armed restless bandit problem} (MARBP), in~which $M$ out of $N > M$ restless bandit projects must be selected to be engaged at each time to maximize the value (under a discounted or long-run average criterion) earned from the $N$ projects over an infinite horizon.
For a sample of recent applications, see, for example,~\cite{qianetal16,fuetal16,borkarPatta17,borkarRavikSab17,borkaretal18,gerumetal19,abbuMakis19,ayeretal19,nmcor19,fuMoran20,hsuetal20,sunetal20,lietal20}.
While the MARBP is computationally intractable (\hl{PSPACE-hard}; 
see~\cite{papTsik99}), the~\emph{Whittle index policy} is an intuitively appealing heuristic where, at each time, $M$ projects with the highest current indices are engaged, so the Whittle index plays the role of a \emph{priority index} for a project to be engaged.
This policy is convenient for practical implementation, as~it avoids the \emph{curse of dimensionality}, since each project has its own Whittle index. Furthermore, an~extensive body of numerical evidence accumulated over the last three decades has shown that the policy is often nearly optimal. 
Though its exact analysis remains elusive, Whittle's conjecture that his proposed policy enjoys a form of asymptotic optimality has been established under certain conditions. See~\cite{wewe90,wewe91,ouyangetal16,verloop16}.

Yet, unlike the Gittins index, which is well-defined for any classic bandit, the~Whittle index exists only for a limited type of restless bandits, which are called \emph{indexable}.
{\hl{Typically, researchers use ad~hoc analyses to  prove indexability  and calculate the Whittle index in particularly models.}}
In contrast, the~author has introduced and developed in~\cite{nmaap01,nmmp02} a methodology to establish indexability and compute the Whittle index for general finite-state restless bandits, extended to the semi-Markov denumerable-state case in~\cite{nmmor06} and to the continuous-state case in~\cite{nmmor20}.
\mbox{The effectiveness} of such an approach, based on verification of so-called \emph{PCL-indexability conditions}---as they are grounded on satisfaction by project performance metrics of \emph{partial conservation laws} (PCLs)---has been demonstrated in diverse models. See, for example,~\cite{nmqs06,nmtop07,caonyb08,hubWu08,nmijoc08,nmcor12,nmejor12,heetal12,mennerZeil18,nmcor19,danceSi19}.
 
In the case of finite-state restless bandits,~\cite{nmaap01,nmmp02} introduced the concept of satisfaction of PCLs by project performance metrics, along with a related one-pass \emph{adaptive-greedy index algorithm}, \mbox{which calculates} the $n$ index values of an $n$-state \emph{PCL-indexable}  project in $n$ steps.
This has its early roots in Klimov's algorithm in~\cite{kl74} for calculating the optimal priority indices for scheduling a multiclass queue with Bernoulli feedback, which was extended in~\cite{nm96} to a framework of stochastic scheduling systems satisfying so-called \emph{generalized conservation laws}, such as the classic (non-restless) multi-armed bandit problem and branching bandits. See also~\cite{nmconsLawsEORMS,nmKlimovsEORMS}.
Yet, the aforementioned work has not addressed the efficient computational implementation of such an algorithm, which is necessary for its actual deployment and widespread~application. 
 
This paper develops an efficient computational scheme for calculating the Whittle index of a general finite-state PCL-indexable restless bandit, by~extending the approach in~\cite{nmijoc07} from discrete-time classic bandits to semi-Markov restless bandits. 
The main contribution is a new
fast-pivoting block implementation of the adaptive-greedy Whittle index algorithm in~\cite{nmaap01,nmmp02} that performs---after an initialization stage that entails solving a block $n \times n$ linear equation
system---$(2/3) n^3 + O(n^2)$ arithmetic operations.
The complexity of the resulting algorithm can be further
reduced in particular models, by~exploiting the special structure of
the underlying state space and 
transition matrices. 
However, it appears unlikely that such an operation count can be further reduced for general restless bandits, 
since as shown in~\cite{nmaap01,nmmp02} computing the Whittle index, even if the ordering of the project states was known in advance, entails the solution of an $n \times n$ linear equation system, which would be solved in $(2/3) n^3 + O(n^2)$ operations by Gaussian~elimination.

Note that this is the fastest operation count for a general Whittle index algorithm presented to date. 
In contrast, the~Whittle index algorithm in~\cite{ayestetal20}, which applies only to the average criterion, has a complexity of $O(n^4 2^n)$ operations, which reduces to $O(n^5)$ for {\hl{projects with a one-dimensional state that are known beforehand to be both indexable and solved optimally by 
 threshold policies.}}

\subsection*{Structure of the~Paper}
\label{s:sofp}
The rest of the paper proceeds as follows.
Section~\ref{s:litrev} presents a review of the related literature.
Section~\ref{s:rbi} reviews previous results on Whittle indexation for
finite-state restless bandits via PCL-indexability. 
Section~\ref{s:ecge} lays the groundwork for an efficient implementation of the adaptive-greedy index algorithm, drawing on \emph{dynamic programming} (DP) and 
\emph{linear programming} (LP) methods.
Section~\ref{s:fpia} applies such results to develop a  fast-pivoting computational implementation of the
adaptive-greedy index algorithm.
Section~\ref{s:rac} outlines how to extend the previous
results to the average criterion.
Section~\ref{s:ce} presents the results of a numerical
study testing the runtimes of several index algorithms and demonstrating that the proposed fast-pivoting algorithm has significantly lower runtimes than alternative algorithms.
Section~\ref{s:disc} discusses the results presented in the paper.
Section~\ref{s:concl} concludes the~paper.

\section{Review of Related~Literature}
\label{s:litrev}
In this section, we review related literature, focusing on relatively recent work.
We refer the reader to the recent monograph~\cite{zhao20} on multi-armed bandits, which discusses both the MABP and the MARBP and their widespread~applications.

When projects are classic (non-restless), the~Whittle index reduces to the Gittins index. 
The efficient computation of the Gittins index has been extensively investigated in the literature.
See, for example,~\cite{vawabu85,chenka86,kallenb86,katehVein87,kattaset04,sonin08}.
While some algorithms proposed in the aforementioned papers perform $O(n^3)$ arithmetic operations for a general $n$-state bandit,~\cite{nmijoc07} presents a \emph{fast-pivoting} implementation of the adaptive-greedy Gittins index algorithm in~\cite{nm96} that performs
 $(2/3)n^3 + O(n^2)$ arithmetic operations, thus achieving lower
complexity than alternative algorithms and matching that of Gaussian elimination for solving an \mbox{$n \times n$ linear} equation system.
It is unlikely that such a complexity count can be improved, since it is shown in~\cite{nm96} that computing the Gittins index reduces precisely to solving an $n \times n$ linear equation system whose solution is subject to certain inequalities. 
The algorithm in~\cite{nmijoc07} is based on an elucidation of the pattern of parametric simplex
 tableaux and exploitation of special structure to lower
 the computational effort of simplex pivoting steps with respect to standard~pivoting.
 
 In contrast, efficient computation of the Whittle index for restless bandits has received relatively scant research attention. 
Three approaches can be distinguished in the literature: 
deriving the Whittle index in closed form, iterative index approximation, and~exact numerical computation.
The first approach is to derive the Whittle index in closed form, as, for example,~ in~\cite{fuetal16,ayeretal19,fuMoran20,hsuetal20,sunetal20,lietal20}. 
This is to be preferred whenever possible, as~the resulting analytical expressions for the Whittle index, besides~facilitating its numerical evaluation, may provide valuable insight on the index dependence on model parameters. 
However, obtaining closed-form Whittle index formulae is only possible in relative simple models, typically with a one-dimensional~state.

In more complex models in which the Whittle index cannot be evaluated in closed form, the~most widespread approach, which has its roots in the calibration method for the Gittins index in~\cite{gi79}, \mbox{is to} apply an iterative procedure for approximately computing the index. This is done, for example, in~\cite{qianetal16,borkarPatta17,borkarRavikSab17,borkaretal18,gerumetal19,abbuMakis19}.
Besides the drawback that the resulting index is only an approximation to the true Whittle index, \mbox{this approach} is typically computationally~expensive.

As for the third approach, efficient exact numerical computation of the Whittle index, this has received the least research attention to date.
In~\cite{nmaap01,nmmp02}, the author introduced an \emph{adaptive-greedy algorithm} for computing the Whittle index in 
general finite-state restless bandits, provided that they satisfy the PCL-indexability conditions also introduced in such work.
The algorithmic computes the $n$ Whittle index values of an $n$-state bandit in $n$ steps, proceeding in a greedy fashion at each step.
However, as~given in~\cite{nmaap01,nmmp02}, such a method provides only an \emph{algorithmic scheme}, as~it is not specified how certain metrics that arise in its description are to be computed in practice.
The effectiveness of such an approach is demonstrated in~\cite{nmmp02} in the setting of a broad birth--death restless bandit model, both under the discounted and long-run average criteria, motivated by queueing admission control and routing problems, where a specific implementation of the adaptive-greedy algorithm for such a case is obtained that computes the first $n$ Whittle indices in $O(n)$ operations, under~mild conditions on model parameters. 
An alternative approach to long-run average birth--death restless bandits is developed in~\cite{larraetal16}. Yet, to~prove indexability, Proposition 2 in that paper assumes both that threshold policies are optimal (which in the birth--death model in~\cite{nmmp02} is obtained as a byproduct of the PCL-indexability conditions) and that a certain function of steady-state probabilities is strictly increasing, which is nontrivial to verify. Further, the Reference~\cite{larraetal16} does not give an index-computing algorithm, but~an expression for the Whittle index in terms of steady-state metrics, which also appears in~\cite{nmmp02}.

A Whittle index computing algorithm for general continuous-time finite-state restless bandits has been recently proposed in~\cite{ayestetal20}, focusing on the average criterion, as~it is stated there that the approach does not apply to the discounted criterion. As~for the arithmetic-operation complexity of the Whittle index algorithm in~\cite{ayestetal20}, which also checks for indexability, Remark 3.5 states that it performs $O(n^4 2^n)$ operations. 
 Even in the case that indexability is known to hold and that threshold policies are optimal, it is stated in Remark 4.1 of that paper that the complexity of that index algorithm reduces to $O(n^5)$ operations.
 This is to be contrasted with the Whittle index algorithm presented herein, which has a cubic operation complexity in the number $n$ of bandit~states.

\section{Review of Finite-State Restless Bandit Whittle Indexation via~PCL-Indexability}
\label{s:rbi}
This section reviews key results of the author's
approach to restless bandit indexation, as~it applies to a single finite-state
semi-Markov restless bandit project, which we
 will simply refer to as a ``project'' in the~sequel.

\subsection{SMDP Restless Bandits and Their Discrete-Stage~Reformulation}
\label{s:dtr}
Consider  a SMDP restless bandit project, as outlined in Section~\ref{s:intro}.  
We next describe its standard discrete-stage reformulation (see, for example, (\cite{put94}, Ch.\ 11)).
If, at decision epoch $t_k$  the project lies in state $X_k = i$ and
 action $A_k = a$ is chosen, 
the joint distribution of the length $t_{k+1} - t_k$ of the following
\emph{$(i, a)$-stage} and embedded 
state $X_{k+1}$  is characterized by the transition distribution function
\[
H_{ij}^a(t) \triangleq 
\Prob_i^a\left\{t_{k+1} - t_k \leqslant t, X_{k+1} = j\right\}, \quad t \geqslant 0, 
\]
having Laplace--Stieltjes transform (LST)
\[
\psi_{ij}^{a}(\alpha) \triangleq \int_0^\infty e^{-\alpha t} \, dH_{ij}^a(t) = 
\Ex_i^a\big[1_{\{X_{k+1} = j\}} e^{-\alpha (t_{k+1} - t_k)}\big], \quad \alpha > 0,
\]
where $\Prob_i^a$ and $\Ex_i^a$ denote probability and expectation conditional on starting from state $i$ with action $a$ ($X_k = i, A_k = a$).

From $H_{ij}^a(t)$, we have that the distribution of the length of an
$(i, a)$-stage is
\[
H_i^a(t) \triangleq \Prob_i^a\{t_{k+1} - t_k \leqslant t\} = \sum_{j \in \mathcal{N}} H_{ij}^a(t),
\]
having LST
\begin{equation}
\label{eq:phiialst}
\psi_{i}^{a}(\alpha) \triangleq 
\Ex_i^a[e^{-\alpha (t_{k+1} - t_k)}]
= \sum_{j \in \mathcal{N}} \psi_{ij}^{a}(\alpha),
\end{equation}
and mean
\[
m_i^a \triangleq \Ex_i^a[t_{k+1} - t_k \mid X_k = i, A_k = a]
= \int_0^\infty t \, dH_i^a(t).
\]

The one-stage transition probabilities for the
embedded process are
\[
p_{ij}^a \triangleq 
\Prob_i^a\{X_{k+1} = j  \mid  X_k = i, A_k = a\} = 
\lim_{t \to \infty} H_{ij}^a(t) = 
\lim_{\alpha \searrow 0}  \psi_{ij}^{a}(\alpha).
\]

Recall that the process $X(t)$ may
change state between successive decision epochs. 
Its dynamics \emph{within} an
$(i, a)$-stage are characterized by 
\[
\tilde{p}_{ij}^{a}(s) \triangleq \Prob_i^a\{X(t_{k} + s) = j \, \mid \, t_{k+1} - t_k > s\}, \quad s \geqslant 0, 
\]
the conditional probability that, $s$ time
  units after the start of an $(i, a)$-stage, and~given that this is still ongoing, the~project occupies state $j$.
We can thus formulate the expected discounted amount of resource consumed and 
  reward obtained in an $(i, a)$-stage, respectively, as~\begin{equation}
\label{tildetia}
q_i^a \triangleq \Ex_i^a\Big[\int_{t_k}^{t_{k+1}} Q_{X(t)}^{A_k} e^{-\alpha (t-t_k)}
   \, dt\Big] = 
 \sum_{j \in \mathcal{N}} Q_{j}^{a} \int_0^\infty \tilde{p}_{ij}^{a}(s) \bar{H}_i^a(s) e^{-\alpha
   s} \, ds
\end{equation}
and
\begin{equation}
\label{tilderia}
r_i^a \triangleq \Ex_i^a\Big[\int_{t_k}^{t_{k+1}} R_{X(t)}^{A_k} e^{-\alpha (t-t_k)}
   \, dt\Big] = 
 \sum_{j \in \mathcal{N}} R_{j}^{a} \int_0^\infty \tilde{p}_{ij}^{a}(s) \bar{H}_i^a(s) e^{-\alpha
   s} \, ds,
\end{equation}
where $\bar{H}_i^a(s) \triangleq 1 - H_i^a(s)$ is the tail distribution of the length of an $(i, a)$-stage.

Recall that we denote by $n \triangleq |\mathcal{N}|$ the number of project~states.

\subsection{Indexability, Whittle Index, and~the Achievable Resource--Reward Performance~Region}
\label{s:wrm}
We start by considering the
   discounted criterion with rate $\alpha > 0$,
deferring discussion of the average criterion to Section~\ref{s:rac}.
We consider the following project performance metrics to evaluate a policy $\pi \in \Pi$ starting from a state $i$: the \emph{reward metric}
\begin{equation}
\label{eq:fipid}
F_i^\pi 
\triangleq \Ex_i^\pi\Big[\int_{0}^\infty R_{X(t)}^{A(t)} 
  e^{-\alpha t} \, dt\Big] = \Ex_i^\pi\bigg[\sum_{k=0}^\infty r_{X_k}^{A_k}
  e^{-\alpha t_k}\bigg],
\end{equation}
measuring the expected discounted value of rewards obtained, and~the 
\emph{resource metric}
\begin{equation}
\label{eq:gipid}
G_i^\pi 
\triangleq \Ex_i^\pi\Big[\int_{0}^\infty Q_{X(t)}^{A(t)} 
  e^{-\alpha t} \, dt\Big] = \Ex_i^\pi\bigg[\sum_{k=0}^\infty
  q_{X_k}^{A_k} e^{-\alpha t_k}\bigg],
\end{equation}
giving the expected discounted quantity of resource consumed. 
Note that the right-hand side expectations  in
(\ref{eq:fipid})--(\ref{eq:gipid}) are formulated in terms of the aforementioned
discrete-stage~reformulation.

We will also refer to the 
 metrics
obtained by drawing the initial state from a probability mass vector
 $\mathbf{p} = (p_i)_{i \in \mathcal{N}}$, given by
\begin{equation}
\label{eq:fpigpip}
F_{\mathbf{p}}^\pi \triangleq \sum_{i \in \mathcal{N}} p_i F_i^\pi \quad \text{ and } \quad
G_{\mathbf{p}}^\pi \triangleq \sum_{i \in \mathcal{N}} p_i G_i^\pi.
\end{equation}

To calibrate the marginal value of engaging the project in each
state, we
introduce a parameter $\lambda \in \mathbb{R}$ representing the \emph{resource $($unit$)$ price}, 
and define the \emph{$($net$)$ project value  metric} $V_{\mathbf{p}}^\pi(\lambda) \triangleq F_{\mathbf{p}}^\pi - \lambda G_{\mathbf{p}}^\pi$, so the \emph{optimal project value} is $V_{\mathbf{p}}^*(\lambda) \triangleq \sup_{\pi \in \Pi} \, V_{\mathbf{p}}^\pi(\lambda)$. We also write $V_{i}^\pi(\lambda)$ and $V_{i}^*(\lambda)$ as in Section~\ref{s:intro}.
Consider the parametric family $\mathcal{P}$ of 
\emph{$\lambda$-price problems} $(P_\lambda)$ defined in (\ref{eq:nuwpro}) for $\lambda \in \mathbb{R}$.
Thus, for~given $\lambda$, problem (\ref{eq:nuwpro}) is to  find an admissible policy that maximizes the project~value.

The \emph{dynamic programming} (DP) optimality equations
for $\lambda$-price problem $(P_\lambda)$ are
\begin{equation}
\label{eq:dpequ}
V_i^*(\lambda) = \max_{a \in \{0, 1\}} \, r_i^a - \lambda q_i^a + 
  \sum_{j \in \mathcal{N}} \psi_{ij}^a V_j^*(\lambda), \quad i \in \mathcal{N},
\end{equation}
where we write $\psi_{ij}^a = \psi_{ij}^a(\alpha)$.
Classic results of the SMDP theory ensure existence of a \emph{$P_\lambda$-optimal
policy} $\pi_\lambda^*$ solving (\ref{eq:nuwpro}) that is stationary deterministic ($\pi_\lambda^* \in \Pi^{\textup{SD}}$).

We  will also consider the optimal project value starting from state $i$ with initial action $a$, given by
\begin{equation}
\label{eq:vlastarx}
V_i^{\langle a, *\rangle}(\lambda) \triangleq
r_i^a - \lambda q_i^a + 
  \sum_{j \in \mathcal{N}} \psi_{ij}^a V_j^*(\lambda),
\end{equation}
and say that \emph{action $a$ is $P_\lambda$-optimal in state $i$}  if  $V_i^{\langle a, *\rangle}(\lambda) \geqslant V_i^{\langle 1-a, *\rangle}(\lambda)$.
It will be convenient to reformulate such a definition in terms of the sign of $\Delta_{a=0}^{a=1} V_i^{\langle a, *\rangle}(\lambda) \triangleq V_i^{\langle 1, *\rangle}(\lambda) - V_i^{\langle 0, *\rangle}(\lambda)$,  the~\emph{marginal value of engaging the project in state $i$}.  Thus,  
action $a = 1$ is $P_\lambda$-optimal in $i$ if $\Delta_{a=0}^{a=1} V_i^{\langle a, *\rangle}(\lambda) \geqslant 0$; $a = 0$ is $P_\lambda$-optimal in $i$ if $\Delta_{a=0}^{a=1} V_i^{\langle a, *\rangle}(\lambda) \leqslant 0$; and, hence, both actions are $P_\lambda$-optimal in $i$ if, and only if,
\begin{equation}
\label{eq:bothactopt}
\Delta_{a=0}^{a=1} V_i^{\langle a, *\rangle}(\lambda) = 0.
\end{equation}

Since actions are binary, 
it is convenient to represent a stationary deterministic policy by its \emph{active set} $S
\subseteq \mathcal{N}$,  which is the subset of states
where,  at~a decision
epoch, the~policy chooses the active action. We will refer to the
\emph{$S$-active policy} and write $F_i^S$, $G_i^S$, and~$V_{i}^S(\lambda)$.

Thus, we can  reformulate $\lambda$-price
problem (\ref{eq:nuwpro}) as the \emph{discrete optimization problem}
\begin{equation}
\label{eq:nuwprodo}
(P_\lambda)\colon \quad \textup{find} \enspace S^* \subseteq \mathcal{N} \enspace \textup{such that} \enspace V_{i}^{S^*}(\lambda)  = V_{i}^*(\lambda) \enspace \textup{for every} \enspace i \in \mathcal{N}.
\end{equation}

For a given resource price $\lambda$, the~\emph{$P_\lambda$-optimal active} and \emph{passive sets} are given by
\begin{equation}
\label{eq:Sstarlam}
S^{*, 1}(\lambda) \triangleq \big\{i \in \mathcal{N}\colon \Delta_{a=0}^{a=1} V_i^{\langle a, *\rangle}(\lambda) \geqslant 0\big\} \quad \textup{and} \quad S^{*, 0}_\lambda \triangleq \big\{i \in \mathcal{N}\colon  \Delta_{a=0}^{a=1} V_i^{\langle a, *\rangle}(\lambda) \leqslant 0\big\}.
\end{equation}

\subsection{Indexability}
\label{s:piip}
We will address the parametric problem collection $\mathcal{P}$ in (\ref{eq:nuwpro}) through
 the concept of
\emph{indexability}, extended in~\cite{whit88} from classic to restless bandits with resource consumption  $Q_i^a \triangleq a$, and~further extended in~\cite{nmmp02} to general $Q_i^a$. 

Under indexability, the~$P_\lambda$-optimal active and passive sets 
$S^{*, 1}(\lambda)$ and  $S^{*, 0}(\lambda)$ are characterized by an 
\emph{index} attached to project states. Note that the definition below refers to the \emph{sign} function $\sgn\colon \mathbb{R} \to \{-1, 0, 1\}$, and~follows the formulation of indexability in (\cite{nmmor20}, Definition 1). 

\begin{definition}[Indexability and Whittle index]
\label{def:indxnm}
The project is \emph{indexable} if there exists a mapping $i \mapsto \lambda_i^*$ for $i \in  \mathcal{N}$, such that
\begin{equation}
\label{eq:indxsigndef}
\sgn \Delta_{a=0}^{a=1} V_i^{\langle a, *\rangle}(\lambda) = \sgn (\lambda_i^* - \lambda), \quad \textup{for every state} \enspace i \in \mathcal{N} \enspace \textup{and price} \enspace \lambda \in \mathbb{R}.
\end{equation}
We call $\lambda_i^*$ the project's $($generalized$)$ \emph{Whittle index} $($\emph{Gittins index} if the project is non-restless$).$
\end{definition}
Thus, under~indexability, the~$P_\lambda$-optimal active and passive sets 
$S^{*, 1}(\lambda)$ and  $S^{*, 0}(\lambda)$ are characterized as
\begin{equation}
\label{eq:Sstarlam2}
S^{*, 1}(\lambda) \triangleq \big\{i \in \mathcal{N}\colon \lambda_i^* \geqslant \lambda\big\} \quad \textup{and} \quad S^{*, 0}_\lambda \triangleq \big\{i \in \mathcal{N}\colon  \lambda_i^* \leqslant \lambda\big\}.
\end{equation}

As for the economic interpretation of the Whittle index,
it is shown in~\cite{nmmp02,nmmor06,nmtop07}
that $\lambda_i^*$ measures the \emph{marginal productivity of the resource
at state $i$}. 

\subsection{PCL-Indexability and Adaptive-Greedy~Algorithm}
\label{s:pclic}
In applications of Whittle indexation to given restless bandit models, researchers are
concerned with
establishing analytically their indexability and efficiently computing the index.
For that purpose, the~author introduced the \emph{PCL-indexability
  conditions} in~\cite{nmaap01,nmmp02,nmmor06} because they are grounded on the satisfaction of
\emph{partial conservation laws} (PCLs). See~\cite{nmconsLawsEORMS} for a survey on conservation laws in stochastic scheduling and multi-class queueing~systems.

Such conditions can be used to establish indexability relative to a nonempty
family $\mathcal{F} \subseteq 2^{\mathcal{N}}$ of active sets that one should posit a
priori (based on insight on the model's structure).

\begin{definition}[$\mathcal{F}$-indexability]
\label{def:findx}
{We say that the project, or~more precisely, the parametric collection $\mathcal{P}$ of $\lambda$-price problems 
$(P_\lambda)$ in (\ref{eq:nuwprodo}), is
\emph{$\mathcal{F}$-indexable} if: (i) it is indexable; and (ii) for every price $\lambda$, there exists an optimal active set $S^*(\lambda) \in \mathcal{F}$, such that the $S^*(\lambda)$-active policy is $P_\lambda$-optimal.}
\end{definition}

Thus, for~an $\mathcal{F}$-indexable project, 
problem (\ref{eq:nuwprodo}) can be reduced to
\begin{equation}
\label{eq:nuwpf}
(P_\lambda)\colon \quad \textup{find} \enspace S^* \in \mathcal{F} \enspace \textup{such that} \enspace V_{i}^{S^*}(\lambda)  = V_{i}^*(\lambda) \enspace \textup{for every} \enspace i \in \mathcal{N},
\end{equation}
since, in such a case \emph{$\mathcal{F}$-policies} (those with active sets $S \in
\mathcal{F}$) are optimal for (\ref{eq:nuwprodo}).

Algorithmic considerations lead us to require that active-set family
$\mathcal{F}$ satisfies the 
connectedness conditions stated next.
We will
refer to the \emph{inner boundary of $S$ with respect to $\mathcal{F}$}, given by 
\[
\partial_{\mathcal{F}}^{\textup{in}} S \triangleq \{i \in S\colon
S \setminus \{i\} \in \mathcal{F}\}, 
\]
and to the \emph{outer boundary of $S$ with respect to $\mathcal{F}$}, given by 
\[
\partial_{\mathcal{F}}^{\textup{out}} S \triangleq \{j \in S^c\colon
S \cup \{j\} \in \mathcal{F}\}.
\]

\begin{assumption}
\label{ass:ccpcl}
\begin{itemize}[align=parleft,leftmargin=*,labelsep=5.5mm]
\item[\textup{(i)}] For every nonempty $S \in \mathcal{F}$, $\partial_{\mathcal{F}}^{\textup{in}} S$ is nonempty.
\item[\textup{(ii)}] For every $S \in \mathcal{F} \setminus \{\mathcal{N}\}$, $\partial_{\mathcal{F}}^{\textup{out}} S$
is nonempty.
\end{itemize} 
\end{assumption}

Note that from the requirement that $\mathcal{F}$ be nonempty, along with
Assumption \ref{ass:ccpcl}, it follows that $\emptyset, \mathcal{N} \in \mathcal{F}$.

To formulate the following result, we need to consider certain
\emph{marginal metrics}.
For an action $a \in \{0, 1\}$ and an active set $S \subseteq \mathcal{N}$, 
denote by $\langle a, S\rangle$ the policy taking initially action
$a$, and~thereafter following the 
$S$-active policy. For~a state $i$ and an active set $S$, 
define the  \emph{marginal resource (consumption) metric} by
\begin{equation}
\label{eq:mwm}
g_i^S \triangleq G_i^{\langle 1, S\rangle} - G_i^{\langle 0, S\rangle},
\end{equation}
that is, as~the marginal increase in resource consumed resulting
from taking first the active rather than the passive action
at state $i$, provided that the $S$-active policy is followed~thereafter.

Define also the \emph{marginal reward metric} by
\begin{equation}
\label{eq:mrm}
f_i^S \triangleq F_i^{\langle 1, S\rangle} - F_i^{\langle 0, S\rangle},
\end{equation}
that is, as~the marginal increase in the value of rewards gained.
Finally, for~$g_i^S \neq 0$, 
define the \emph{marginal productivity rate metric}  by
\begin{equation}
\label{eq:mpm}
\lambda_i^S \triangleq \frac{f_i^S}{g_i^S}.
\end{equation}

The following definition refers to the \emph{adaptive-greedy index
  algorithm}, which is given in
\mbox{Algorithms~\ref{fig:agaf} and \ref{fig:agafbu}} in its top-down and bottom-up
versions, respectively. In~the former, index values are computed from
largest to smallest, whereas in the latter they are computed in the
opposite~order.

\begin{algorithm}[H]
\caption{\hl{Adaptive-greedy algorithm: top-down version} 
$\mathrm{AG}_{\mathcal{F}}^{\textup{TD}}$.}
\begin{center}
\fbox{%
\begin{minipage}{1.8in}
\textbf{Output:}
$\{j_k, \lambda^*_{j_k}\}_{k=1}^{n}$
\begin{tabbing}
$S_0 := \emptyset$ \\
\textbf{for} \= $k := 1$ \textbf{to}  $n$ \textbf{do} \\
\> \textbf{pick}  
 $j_k \in \argmax
      \{\lambda^{S_{k-1}}_{j}\colon
                 j \in \partial^{\textup{out}}_{\mathcal{F}} S_{k-1}\}$ \\
 \>  $\lambda^*_{j_k} := 
 \lambda^{S_{k-1}}_{j_k}$;  \, $S_{k} := S_{k-1} \cup \{j_k\}$ \\
\textbf{end} \{ for \}
\end{tabbing}
\end{minipage}}
\end{center}
\label{fig:agaf}
\end{algorithm}
\unskip

\begin{algorithm}[H]
\caption{\hl{Adaptive-greedy algorithm: bottom-up version }  
$\mathrm{AG}_{\mathcal{F}}^{\textup{BU}}$.}
\begin{center}
\fbox{%
\begin{minipage}{1.8in}
\textbf{Output:}
$\{i_k, \lambda^*_{i_k}\}_{k=1}^{n}$
\begin{tabbing}
$S_0' := \mathcal{N}$ \\
\textbf{for} \= $k := 1$ \textbf{to}  $n$ \textbf{do} \\
\> \textbf{pick}  
 $i_k \in \argmin
      \{\lambda^{S_{k-1}'}_{i}\colon
                 i \in \partial^{\textup{in}}_{\mathcal{F}} S_{k-1}'\}$ \\
 \>  $\lambda^*_{i_k} := 
 \lambda^{S_{k-1}'}_{i_k}$;  \, $S_{k}' := S_{k-1}' \setminus \{i_k\}$ \\
\textbf{end} \{ for \}
\end{tabbing}
\end{minipage}}
\end{center}
\label{fig:agafbu}
\end{algorithm}

\begin{definition}[PCL$(\mathcal{F})$-indexability]
\label{def:pcli} 
{We say that the project is \emph{PCL$(\mathcal{F})$-indexable}
  if:
\begin{itemize}[align=parleft,leftmargin=*,labelsep=5.5mm]
\item[(i)]
for every active set $S \in \mathcal{F}$,  
$g_i^S > 0$ for each $i \in \mathcal{N}$; and
\item[(ii)] algorithm $\mathrm{AG}_{\mathcal{F}}^{\textup{TD}}$
  computes a monotone non-increasing index sequence $(\lambda_{j_1}^* \geqslant \cdots \geqslant \lambda_{j_n}^*)$; or
  algorithm $\mathrm{AG}_{\mathcal{F}}^{\textup{BU}}$ computes a
  monotone non-decreasing
  index sequence $(\lambda_{i_1}^* \leqslant \cdots \leqslant \lambda_{i_n}^*)$.
\end{itemize}
}
\end{definition}

\begin{theorem}
\label{the:pclind}
Suppose that the project is PCL$(\mathcal{F})$-indexable$.$ Then, it is $\mathcal{F}$-indexable and
the index $\lambda_i^*$ computed by either adaptive-greedy index algorithm
is its Whittle index$.$
\end{theorem}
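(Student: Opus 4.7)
The plan is to show that the active sets $S_0 = \emptyset, S_1, \ldots, S_n = \mathcal{N}$ generated by Algorithm $\mathrm{AG}_{\mathcal{F}}^{\textup{TD}}$ and the scalars $\lambda^*_{j_k}$ jointly certify, via the DP optimality equations~(\ref{eq:dpequ}), that $S_k$ is $P_\lambda$-optimal for every $\lambda \in [\lambda^*_{j_{k+1}}, \lambda^*_{j_k}]$ (with the conventions $\lambda^*_{j_0} := +\infty$ and $\lambda^*_{j_{n+1}} := -\infty$). Once this piecewise-constant optimal-policy structure is in hand, the characterization~(\ref{eq:Sstarlam2}) of $P_\lambda$-optimal active/passive sets is immediate, yielding both $\mathcal{F}$-indexability of the project and the identification of $\lambda^*_i$ with the Whittle index of~Definition~\ref{def:indxnm}.

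The central algebraic identity underlying every sign comparison is
\[
V_i^{\langle 1, S\rangle}(\lambda) - V_i^{\langle 0, S\rangle}(\lambda) = f_i^S - \lambda g_i^S = g_i^S\bigl(\lambda_i^S - \lambda\bigr),
\]
which follows at once from the definitions~(\ref{eq:mwm})--(\ref{eq:mpm}). Combined with PCL-indexability condition~(i), which supplies $g_i^S > 0$, this gives $\sgn\bigl(V_i^{\langle 1, S\rangle}(\lambda) - V_i^{\langle 0, S\rangle}(\lambda)\bigr) = \sgn(\lambda_i^S - \lambda)$. Consequently, once an inductive hypothesis lets me substitute $V_j^{S_k}$ for $V_j^*$ on the right-hand side of~(\ref{eq:vlastarx}), checking that the action prescribed by $S_k$ at a state $i$ is $P_\lambda$-optimal reduces to showing that $\lambda_i^{S_k}$ lies on the correct side of $\lambda$.

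I would then run a backward induction on $k$ from $k = n$ downward. For a candidate passive state $i \in \partial^{\textup{out}}_{\mathcal{F}} S_k$, the argmax selection of $j_{k+1}$ at the subsequent algorithm step already yields $\lambda_i^{S_k} \leq \lambda_{j_{k+1}}^{S_k} = \lambda^*_{j_{k+1}} \leq \lambda$, certifying optimality of resting. For an active state $i \in S_k$ I would need $\lambda_i^{S_k} \geq \lambda$, which should flow from the monotonicity in PCL-indexability condition~(ii) combined with a consistency relating the marginal productivity rates $\lambda_i^{S_m}$ across steps after $i$ was admitted. The boundary case $\lambda = \lambda^*_{j_k}$ is where both $S_{k-1}$ and $S_k$ are $P_\lambda$-optimal, pinning down the transition between successive optimality intervals.

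The main obstacle is precisely this last consistency requirement: the algorithm records $\lambda^*_{j_k} = \lambda_{j_k}^{S_{k-1}}$, measured \emph{before} $j_k$ joins the active set, yet verifying optimality of $S_k$ at the state $i = j_\ell$ for $\ell \leqslant k$ demands $\lambda_{j_\ell}^{S_k} \geqslant \lambda^*_{j_\ell}$. Propagating these inequalities along the chain $S_0 \subset S_1 \subset \cdots \subset S_n$ is the full strength of the partial conservation laws that give PCL-indexability its name, and I would invoke those identities---or, equivalently, interpret $\mathrm{AG}_{\mathcal{F}}^{\textup{TD}}$ as a parametric simplex method over the achievable-performance region $\{(F^\pi_{\mathbf{p}}, G^\pi_{\mathbf{p}})\colon \pi \in \Pi\}$, whose monotone sequence of extreme points traces the upper envelope $V^*_{\mathbf{p}}(\lambda)$---to close the induction. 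The bottom-up version $\mathrm{AG}_{\mathcal{F}}^{\textup{BU}}$ is handled by a symmetric argument starting from $\mathcal{N}$.
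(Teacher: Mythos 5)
The paper does not actually prove Theorem~\ref{the:pclind}; it cites the proofs in \cite{nmaap01}, \cite{nmmp02} and \cite{nmmor06}. Those proofs do not run a DP verification as you propose, but argue through the achievable-region/LP route: the decomposition identity of Lemma~\ref{lma:dlaws}(c) bounds $F_{\mathbf{p}}^{\pi}-\lambda G_{\mathbf{p}}^{\pi}$ for an \emph{arbitrary} policy $\pi$ by $F_{\mathbf{p}}^{S_k}-\lambda G_{\mathbf{p}}^{S_k}$ once one knows that the reduced costs $f_j^{S_k}-\lambda g_j^{S_k}$ are nonnegative on $S_k$ and nonpositive on $S_k^c$, since the occupation measures multiplying them are nonnegative. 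Your DP verification, if completed, would be checking exactly the same family of sign conditions (your identity $V_i^{\langle 1,S\rangle}(\lambda)-V_i^{\langle 0,S\rangle}(\lambda)=f_i^S-\lambda g_i^S$ is correct and is the bridge between the two formulations), so the two routes are close cousins; the LP route has the advantage of dispensing with any induction over $k$ and of handling all competing policies at once.

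The genuine gap is the one you flag yourself and then defer: for $\lambda\in[\lambda^*_{j_{k+1}},\lambda^*_{j_k}]$ you must show $\lambda_{j_\ell}^{S_k}\geqslant\lambda$ for every $\ell\leqslant k$, i.e.\ that states admitted at earlier steps still have marginal productivity rate above the current price \emph{when measured relative to the later active set $S_k$}. Condition~(ii) of Definition~\ref{def:pcli} only gives monotonicity of the recorded values $\lambda^*_{j_\ell}=\lambda_{j_\ell}^{S_{\ell-1}}$, each evaluated relative to a \emph{different} active set; nothing you have written relates $\lambda_{j_\ell}^{S_k}$ to $\lambda_{j_\ell}^{S_{\ell-1}}$. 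Establishing that link is the entire content of the partial-conservation-law machinery (in the references it is done by showing that the points $(G_{\mathbf{p}}^{S_k},F_{\mathbf{p}}^{S_k})$ are the extreme points of the upper boundary of the achievable region and reading the reduced-cost signs off the concavity of that boundary), so writing ``I would invoke those identities to close the induction'' leaves the theorem unproved. A second, smaller gap: your argument for passive states covers only $i\in\partial^{\textup{out}}_{\mathcal{F}}S_k$, but for $\mathcal{F}\subsetneq 2^{\mathcal{N}}$ the optimality check requires $\lambda_i^{S_k}\leqslant\lambda$ for \emph{all} $i\in S_k^c$, and the argmax step of the algorithm says nothing about states outside the outer boundary.
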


Theorem \ref{the:pclind} was introduced and proven, in~increasingly
general settings, in~(\cite{nmaap01}, Corollary 2), 
\mbox{(Ref. \cite{nmmp02}, Theorem 6.3)} and (\cite{nmmor06}, Theorem 4.1).
The author's work reviewed in~\cite{nmtop07} demonstrates the
applicability of such a result to a number of relevant
restless bandit~models.

Regarding the interpretation of positive marginal resource condition (i) in Definition
\ref{def:pcli}, it is shown in (\cite{nmmp02}, Proposition 6.2) and 
in (\cite{nmmor06},  Lemma 4.3) that it can be
reformulated in terms of the resource metric as follows:  for $\mathbf{p} > \mathbf{0}$,
\begin{equation}
\label{eq:mwpreform}
G_{\mathbf{p}}^{S \setminus \{i\}} < G_{\mathbf{p}}^S < G_{\mathbf{p}}^{S \cup \{j\}}, \quad 
S \in \mathcal{F}, i \in S, j \in S^c.
\end{equation}
Thus, condition (i) represents a
strong form of monotone increasingness of resource metric $G_{\mathbf{p}}^S$ in its active set
$S$ relative to inclusion.
Additionally, recalling the definition of $G_{\mathbf{p}}^\pi$ in (\ref{eq:fpigpip}) in
terms of metrics \mbox{$G_i^\pi$, 
(\ref{eq:mwpreform})  can},  in turn, be reformulated in terms of the latter metrics
as follows: for $S \in \mathcal{F}$, \mbox{$i \in S$ and $j \in S^c$},
\begin{equation}
\label{eq:gspclmon2}
\mathbf{G}^{S \setminus \{i\}} \lneq \mathbf{G}^S \lneq \mathbf{G}^{S
  \cup \{j\}}, \quad S \in \mathcal{F}, i \in S, j \in S^c
\end{equation}
where $\mathbf{G}^\pi = (G_{i_0}^\pi)_{i_0 \in \mathcal{N}}$ and
``$\lneq$'' means ``less than or equal to componentwise, but~not~equal.''

Along with condition (i), condition (ii) in Definition \ref{def:pcli} is interpreted in (\cite{nmmp02}, Section~6.4) in terms of
satisfaction of the \emph{law of diminishing marginal
returns} (to resource usage) in  economics, relative to $\mathcal{F}$-policies.
Such an interpretation is based on the result in (\cite{nmmp02}, Proposition
6.4(a)) that the marginal
productivity rates $\lambda_i^S$ in (\ref{eq:mpm}) for $S \in \mathcal{F}$
can be recast in terms of resource and reward metrics as
\begin{equation}
\label{eq:nuisref}
\lambda_i^S = 
\begin{cases}
\displaystyle \frac{F_{\mathbf{p}}^{S} - F_{\mathbf{p}}^{S \setminus \{i\}}}{G_{\mathbf{p}}^{S} - G_{\mathbf{p}}^{S \setminus \{i\}}}, \quad & i \in
S \\ \\
\displaystyle \frac{F_{\mathbf{p}}^{S \cup \{i\}} - F_{\mathbf{p}}^S}{G_{\mathbf{p}}^{S \cup \{i\}} - G_{\mathbf{p}}^S}, \quad & i \in
S^c.
\end{cases}
\end{equation}

Such a reformulation allows us, in turn, to reformulate the adaptive-greedy
algorithms above in a geometrically intuitive form in the resource--reward
$(\gamma, \phi)$ plane,
as shown in Algorithms~\ref{fig:agafref} and \ref{fig:agafburef}.
We thus see that the top-down algorithm
$\mathrm{AG}_{\mathcal{F}}^{\textup{TD}}$ aims to traverse the upper
boundary $\bar{\partial} \mathcal{R}_{\mathbf{p}}$ of the achievable resource--reward
performance region $\mathcal{R}_{\mathbf{p}}$ from left to right using only active
sets in $\mathcal{F}$, whereas the
bottom-up version $\mathrm{AG}_{\mathcal{F}}^{\textup{BU}}$ seeks to
traverse such an upper
boundary from right to left,  proceeding in a greedy
fashion at each step.
In such a setting, condition (ii) in Definition \ref{def:pcli} means
that the function obtained by linear interpolation on the points $(G_{\mathbf{p}}^S, F_{\mathbf{p}}^S)$ produced by either
algorithm is \emph{concave}.
The remarkable result in Theorem \ref{the:pclind} is that this, along
with condition (i),
suffices to ensure that such a function characterizes the upper
boundary $\bar{\partial} \mathcal{R}_{\mathbf{p}}$ and the Whittle~index.

In~\cite{nmmor06}, such results are extended to restless projects
on the denumerable state space of nonnegative integers, for~which the resource metric is increasing along the
 nested family of active sets induced by the corresponding ordering.
Further, in~Section~3 of that paper the $\mathcal{F}$-indexability
of such projects is characterized in terms of
satisfaction of the law of diminishing marginal returns
relative to $\mathcal{F}$-policies.
In~\cite{nmmor20}, such results are further extended to continuous-state~projects.

\begin{algorithm}[H]
\caption{\hl{Reformulated adaptive-greedy index algorithm: top-down version}  
$\mathrm{AG}_{\mathcal{F}}^{\textup{TD}}$.}
\begin{center}
\fbox{%
\begin{minipage}{1.8in}
\textbf{Output:}
$\{j_k, \lambda^*_{j_k}\}_{k=1}^{n}$
\begin{tabbing}
$S_0 := \emptyset$ \\
\textbf{for} \= $k := 1$ \textbf{to}  $n$ \textbf{do} \\
\> \textbf{pick}  
 $\displaystyle j_k \in \argmax
      \left\{\frac{F_{\mathbf{p}}^{S_{k-1} \cup \{j\}} - F_{\mathbf{p}}^{S_{k-1}}}{G_{\mathbf{p}}^{S_{k-1} \cup \{j\}} - G_{\mathbf{p}}^{S_{k-1}}}\colon
                 j \in \partial^{\textup{out}}_{\mathcal{F}} S_{k-1}\right\}$ \\
 \>  $\lambda^*_{j_k} := 
 \lambda^{S_{k-1}}_{j_k}$;  \, $S_{k} := S_{k-1} \cup \{j_k\}$ \\
\textbf{end} \{ for \}
\end{tabbing}
\end{minipage}}
\end{center}
\label{fig:agafref}
\end{algorithm}
\unskip

\begin{algorithm}[H]
\caption{\hl{Reformulated adaptive-greedy index algorithm: bottom-up version}  
$\mathrm{AG}_{\mathcal{F}}^{\textup{BU}}$.}
\begin{center}
\fbox{%
\begin{minipage}{1.8in}
\textbf{Output:}
$\{i_k, \lambda^*_{i_k}\}_{k=1}^{n}$
\begin{tabbing}
$S_0' := \mathcal{N}$ \\
\textbf{for} \= $k := 1$ \textbf{to}  $n$ \textbf{do} \\
\> \textbf{pick}  
 $\displaystyle i_k \in \argmin
      \left\{\frac{F_{\mathbf{p}}^{S_{k-1}'} - F_{\mathbf{p}}^{S_{k-1}' \setminus
          \{i\}}}{G_{\mathbf{p}}^{S_{k-1}'} - G_{\mathbf{p}}^{S_{k-1}' \setminus \{i\}}}\colon
                 i \in \partial^{\textup{in}}_{\mathcal{F}} S_{k-1}'\right\}$ \\
 \>  $\lambda^*_{i_k} := 
 \lambda^{S_{k-1}'}_{i_k}$;  \, $S_{k}' := S_{k-1}' \setminus \{i_k\}$ \\
\textbf{end} \{ for \}
\end{tabbing}
\end{minipage}}
\end{center}
\label{fig:agafburef}
\end{algorithm}
\unskip

\section{Laying the Groundwork for an Efficient Implementation of the Adaptive-Greedy~Algorithm}
\label{s:ecge}
This section lays the groundwork for developing an efficient implementation of the adaptive-greedy index algorithm.
It draws on LP methods, based on formulating the $\lambda$-price \mbox{problem 
(\ref{eq:nuwpro})} as a parametric LP problem,  and~elucidating
the structure of its simplex~tableaux.

\subsection{Optimality Equations and Parametric LP~Formulation}
\label{s:plpf}
While the LP formulation below is well-known in SMDP theory (cf.\ \cite{kallenb02}), for~convenience
we  next  outline it, starting from 
 the optimality {\hl{equations}}~(\ref{eq:dpequ}) for (\ref{eq:nuwpro}).
The primal LP formulation of such optimality equations is
\begin{align*}
V_{\mathbf{p}}^*(\lambda) = & \min \, \sum_{j \in \mathcal{N}} p_j~v_j \\
    & \text{subject to} \\
    x_i^a\colon 
& v_i - \sum_{j \in \mathcal{N}} \psi_{ij}^a v_j \geqslant r_i^a - \lambda
q_i^a, \quad (i, a) \in \mathcal{N} \times \{0, 1\}.
\end{align*}

Our analyses will instead use the dual problem, 
\begin{align*}
V_{\mathbf{p}}^*(\lambda) = & \max \, \sum_{(j, a) \in \mathcal{N} \times \{0, 1\}} (r_j^a - \lambda q_j^a) x_j^a
    \\
    & \text{subject to} \\
v_j\colon &
\sum_{a \in \{0, 1\}}
(x_j^a -   \sum_{j \in \mathcal{N}} \psi_{ij}^a x_i^a)
 = p_j, \quad j \in \mathcal{N} \\
& x_j^a \geqslant 0,  \quad (j, a) \in \mathcal{N} \times \{0, 1\}.
\end{align*}
It will be convenient to deal with the latter in matrix notation.
We treat occupation-measure vectors $\mathbf{x}^a$ and probability vectors $\mathbf{p}$ as column vectors, and performance/coefficient vectors (e.g., $\mathbf{r}^a$, $\mathbf{q}^a$, $\mathbf{F}^S$, $\mathbf{G}^S$) as row vectors.
 We can thus write
 \begin{equation}
\label{eq:nuwplp}
\begin{split}
V_{\mathbf{p}}^*(\lambda) =
 & \max \,  (\mathbf{r}^0 - \lambda \mathbf{q}^0) \mathbf{x}^{0} 
 + 
(\mathbf{r}^1 - \lambda \mathbf{q}^1) \mathbf{x}^{1} \\
& \text{subject to} \\
& 
\begin{bmatrix} 
\transp{(\mathbf{I} -  \boldsymbol{\Psi}^0)} &
\transp{(\mathbf{I} -  \boldsymbol{\Psi}^1)} 
\end{bmatrix} 
\begin{pmatrix} \mathbf{x}^{0} \\  \mathbf{x}^{1} \end{pmatrix}
= \mathbf{p} \\
& \mathbf{x}^{0}, \mathbf{x}^{1} 
 \geqslant \mathbf{0}.
\end{split}
\end{equation}

Dual variables $x_j^a$  correspond to 
the project's \emph{discounted state-action occupation
  measures}. 
For an initial
state $i$, policy $\pi$, action $a$
 and state $j$, let
\[
x_{ij}^{a, \pi} \triangleq \Ex_i^\pi\left[\sum_{k=0}^\infty 
 1_{\{X(t_k) = j, A(t_k) = a\}} e^{-\alpha t_k}\right]
\]
be the expected discounted number of $(j, a)$-stages
under $\pi$ starting
from $i$.
If the initial state is drawn from $\mathbf{p}$,  $x_j^a$
corresponds to occupation measure {\hl{$x_{\mathbf{p} j}^{a, \pi} \triangleq \sum_i p_i
x_{ij}^{a, \pi}$}}. 
Note that
 reward and resource metrics are linear functions of such occupation measures: writing
 {\hl{$\mathbf{x}_{\mathbf{p}}^{a, \pi} = (x_{\mathbf{p}  j}^{a, \pi})_{j \in \mathcal{N}}$}},
\begin{equation}
\label{eq:figilf}
F_{\mathbf{p}}^\pi = \sum_{(j, a) \in \{0, 1\} \times \mathcal{N}} r_j^a x_{\mathbf{p}  j}^{a, \pi} = 
  \mathbf{r}^0 \mathbf{x}_{\mathbf{p}}^{0, \pi} + 
  \mathbf{r}^1 \mathbf{x}_{\mathbf{p}}^{1, \pi} \enspace \textup{and} \enspace
G_{\mathbf{p}}^\pi  = \sum_{(j, a) \in \{0, 1\} \times \mathcal{N}} q_j^a x_{\mathbf{p}  j}^{a, \pi} = 
  \mathbf{q}^0 \mathbf{x}_{\mathbf{p}}^{0, \pi} + 
  \mathbf{q}^1 \mathbf{x}_{\mathbf{p}}^{1, \pi}.
\end{equation}

\subsection{Bases, Basic Feasible Solutions, and~Reduced~Costs}
\label{s:bfsrc}
We next  analyze  the LP problem (\ref{eq:nuwplp}), starting by elucidating its
 \emph{basic feasible solutions}  (BFS). 
Each BFS is obtained from a \emph{basis} corresponding to an active set 
$S \subseteq \mathcal{N}$, and~hence we will refer to the
\emph{$S$-active BFS}.
Yet, note that different $S$'s might yield the same BFS.
For given $S$, we decompose 
the above vectors and matrices as 
\[
\mathbf{x}^{a} = \begin{pmatrix} \mathbf{x}_{S}^{a} \\
  \mathbf{x}_{S^c}^{a} \end{pmatrix}, \quad
\mathbf{p} = \begin{pmatrix} \mathbf{p}_{S} \\  \mathbf{p}_{S^c}
  \end{pmatrix}, \quad
\boldsymbol{\Psi}^a = \begin{pmatrix} \boldsymbol{\Psi}_{SS}^a &
  \boldsymbol{\Psi}_{S S^c}^a \\
  \boldsymbol{\Psi}_{S^c S}^a & \boldsymbol{\Psi}_{S^c S^c}^a \end{pmatrix}, \quad
\mathbf{I} = \begin{pmatrix} \mathbf{I}_{S S} & \mathbf{0}_{S S^c} \\
  \mathbf{0}_{S^c S} & \mathbf{I}_{S^c S^c} \end{pmatrix}, 
\]
and introduce the matrices
\begin{equation}
\label{eq:matdef}
\begin{split}
\boldsymbol{\Psi}^S & \triangleq \begin{pmatrix} \boldsymbol{\Psi}_{S S}^1 &
  \boldsymbol{\Psi}_{S S^c}^1 \\ \boldsymbol{\Psi}_{S^c
  S}^0 & \boldsymbol{\Psi}_{S^c S^c}^0
  \end{pmatrix}, \quad
\boldsymbol{\Psi}^{S^c} \triangleq \begin{pmatrix} \boldsymbol{\Psi}_{SS}^0 &
  \boldsymbol{\Psi}_{S S^c}^0 \\ \boldsymbol{\Psi}_{S^c S}^1 & \boldsymbol{\Psi}_{S^c S^c}^1
  \end{pmatrix}, \\
\mathbf{B}^S & \triangleq \transp{(\mathbf{I} -  \boldsymbol{\Psi}^S)}, \quad
\mathbf{N}^S \triangleq \transp{(\mathbf{I} -  \boldsymbol{\Psi}^{S^c})}, \quad
\mathbf{H}^S \triangleq (\mathbf{B}^S)^{-1}, \quad
\mathbf{A}^S \triangleq \mathbf{H}^S \mathbf{N}^S.
\end{split}
\end{equation}
Note that $\boldsymbol{\Psi}^S$  is the transition-transform matrix
for the $S$-active  policy.
Additionally, $\mathbf{B}^S$ is the \emph{basis matrix} in LP problem
(\ref{eq:nuwplp})
for 
the $S$-active BFS, which has as \emph{basic variables} 
\[
\begin{pmatrix} \mathbf{x}_S^1 \\ \mathbf{x}_{S^c}^0 \end{pmatrix},
\]
and $\mathbf{N}^S$ is the non-basic matrix of LP problem
(\ref{eq:nuwplp}) corresponding to
\emph{non-basic variables} 
\[
\begin{pmatrix} \mathbf{x}_S^0 \\ \mathbf{x}_{S^c}^1 \end{pmatrix}.
\]
We can thus rearrange the constraints in (\ref{eq:nuwplp}),
decomposing them  as
\[
\mathbf{B}^S 
\begin{pmatrix} \mathbf{x}_{S}^{1} \\ \mathbf{x}_{S^c}^{0}
\end{pmatrix}
+ 
\mathbf{N}^S
\begin{pmatrix} \mathbf{x}_{S}^{0} \\ \mathbf{x}_{S^c}^{1}
\end{pmatrix} = 
\mathbf{p}.
\]

We next evaluate performance
metrics under the $S$-active policy.
{\hl{The notation $x_{\mathbf{p} j}^{a, S}$ below refers to occupation measure $x_{\mathbf{p} j}^{a, \pi}$ under such a policy.}}
Further, $\mathbf{F}^S =(F_j^S)_{j \in \mathcal{N}}$, $\mathbf{G}^S = (G_j^S)_{j \in \mathcal{N}}$, 
$\mathbf{f}^S = (f_j^S)_{j \in \mathcal{N}}$ and  
$\mathbf{g}^S = (g_j^S)_{j \in \mathcal{N}}$ are row~vectors.

\begin{lemma}
\label{lma:pmchar}
\mbox{ }
\begin{itemize}[leftmargin=2.3em,labelsep=4mm]
\item[\textup{(a)}]
{\hl{$\begin{pmatrix} \mathbf{x}_{\mathbf{p}  S}^{0, S} \\ \mathbf{x}_{\mathbf{p}  S^c}^{1, S}
\end{pmatrix} = \mathbf{0}$}} \, and~\, {\hl{$\begin{pmatrix} \mathbf{x}_{\mathbf{p}  S}^{1, S} \\ \mathbf{x}_{\mathbf{p}  S^c}^{0, S}
\end{pmatrix} = \mathbf{H}^S \mathbf{p}$}}.
\item[\textup{(b)}] $\mathbf{G}^S = 
\begin{pmatrix}
\mathbf{q}_S^1 & \mathbf{q}_{S^c}^0
\end{pmatrix}
\mathbf{H}^S$.
\item[\textup{(c)}] 
$\mathbf{F}^S = 
\begin{pmatrix}
\mathbf{r}_S^1 & \mathbf{r}_{S^c}^0
\end{pmatrix} 
\mathbf{H}^S$.
\item[\textup{(d)}] 
$\begin{pmatrix}\mathbf{g}_S^S &
- \mathbf{g}_{S^c}^S
\end{pmatrix} = 
\begin{pmatrix}
\mathbf{q}_S^1 & \mathbf{q}_{S^c}^0
\end{pmatrix} 
\mathbf{A}^S  
 - 
\begin{pmatrix} \mathbf{q}_S^0
 & \mathbf{q}_{S^c}^1
\end{pmatrix}$.
\item[\textup{(e)}] 
$\begin{pmatrix}
\mathbf{f}_S^S &
- \mathbf{f}_{S^c}^S
\end{pmatrix}
= 
\begin{pmatrix}
\mathbf{r}_S^1 & \mathbf{r}_{S^c}^0
\end{pmatrix}
\mathbf{A}^S 
 - 
\begin{pmatrix}  \mathbf{r}_S^0 & \mathbf{r}_{S^c}^1 \end{pmatrix}$.
\end{itemize}
\end{lemma}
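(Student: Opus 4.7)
The plan is to handle the five claims sequentially: (a) is pure LP bookkeeping for the $S$-active basic feasible solution, (b)--(c) follow from (a) by substitution into the linear reward/resource expressions (\ref{eq:figilf}), and (d)--(e) come from a one-step DP decomposition of the marginal metrics combined with (b)--(c). For (a), I would argue directly from LP (\ref{eq:nuwplp}): by definition of the $S$-active basis, the non-basic block $\begin{pmatrix} \mathbf{x}_S^0 \\ \mathbf{x}_{S^c}^1 \end{pmatrix}$ is zeroed, while the basic block solves $\mathbf{B}^S \mathbf{x}_B = \mathbf{p}$, giving $\mathbf{x}_B = \mathbf{H}^S \mathbf{p}$. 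To identify this BFS with the discounted state-action occupation measures $x_{\mathbf{p} j}^{a, S}$ of the $S$-active policy I would invoke the standard SMDP LP correspondence: occupation measures of a stationary deterministic policy satisfy the LP constraint system and are supported exactly on the state-action set prescribed by the policy (here, $a = 1$ on $S$ and $a = 0$ on $S^c$).

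For (b) and (c), I would substitute the values in (a) into the linear identities in (\ref{eq:figilf}); the non-basic components vanish, leaving
\[
G_{\mathbf{p}}^S = \begin{pmatrix} \mathbf{q}_S^1 & \mathbf{q}_{S^c}^0 \end{pmatrix} \mathbf{H}^S \mathbf{p}.
\]
Since $G_{\mathbf{p}}^S = \mathbf{G}^S \mathbf{p}$ and this must hold for an arbitrary probability vector $\mathbf{p}$, part (b) follows; (c) is proved identically with $\mathbf{r}$ in place of $\mathbf{q}$. For (d) and (e), I would use the standard one-step SMDP decomposition
\[
G_i^{\langle a, S\rangle} = q_i^a + \sum_{j \in \mathcal{N}} \psi_{ij}^a G_j^S,
\]
together with the fact that the $S$-active policy chooses action $1_{\{i \in S\}}$ at state $i$, so that $G_i^S = G_i^{\langle 1_{\{i \in S\}}, S\rangle}$. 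This gives, for $i \in S$, $g_i^S = G_i^S - q_i^0 - \sum_j \psi_{ij}^0 G_j^S$, and for $i \in S^c$, $-g_i^S = G_i^S - q_i^1 - \sum_j \psi_{ij}^1 G_j^S$. In both cases the action appearing on the right is the one \emph{opposite} to the $S$-active choice, so it is encoded precisely by $\boldsymbol{\Psi}^{S^c}$ and by $\begin{pmatrix} \mathbf{q}_S^0 & \mathbf{q}_{S^c}^1 \end{pmatrix}$. Packing components into a single row vector,
\[
\begin{pmatrix} \mathbf{g}_S^S & -\mathbf{g}_{S^c}^S \end{pmatrix} = \mathbf{G}^S \mathbf{N}^S - \begin{pmatrix} \mathbf{q}_S^0 & \mathbf{q}_{S^c}^1 \end{pmatrix},
\]
and substituting (b) together with $\mathbf{A}^S = \mathbf{H}^S \mathbf{N}^S$ closes (d); part (e) is the same argument with $(\mathbf{r}, \mathbf{F}, \mathbf{f})$ replacing $(\mathbf{q}, \mathbf{G}, \mathbf{g})$.

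The main obstacle is purely notational: one must track the row/column conventions of the paper, the identification of $\boldsymbol{\Psi}^{S^c}$ as the ``opposite-action'' transition-transform matrix block by block, and the sign flip on the $S^c$-components of the marginal-metric vectors. With the bookkeeping arranged, each claim reduces to a short matrix manipulation. Conceptually, (d)--(e) amount to recognising the marginal metrics as (the $\mathbf{q}$- and $\mathbf{r}$-parts of) reduced-cost expressions for the non-basic variables of the LP, which is the bridge Section~\ref{s:ecge} will use to implement the adaptive-greedy algorithm via parametric simplex pivoting.
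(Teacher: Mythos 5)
Your proposal is correct and follows essentially the same route as the paper: zeroing the non-basic block and solving $\mathbf{B}^S \mathbf{x}_B = \mathbf{p}$ for (a), specializing the occupation-measure expressions for (b)--(c) (the paper takes $\mathbf{p} = \mathbf{e}_j$ where you argue for arbitrary $\mathbf{p}$, which is equivalent), and the same one-step decomposition of the marginal metrics followed by substitution of (b)--(c) and $\mathbf{A}^S = \mathbf{H}^S \mathbf{N}^S$ for (d)--(e). Your explicit derivation of the identity $\begin{pmatrix}\mathbf{g}_S^S & -\mathbf{g}_{S^c}^S\end{pmatrix} = \mathbf{G}^S \mathbf{N}^S - \begin{pmatrix}\mathbf{q}_S^0 & \mathbf{q}_{S^c}^1\end{pmatrix}$ from the DP recursion is exactly the content of the paper's unlabelled starting point (\ref{eq:wsc1}).
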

\begin{proof}
(a) Set to zero non-basic variables:
{\hl{$\mathbf{x}_{\mathbf{p} S}^{0, S} =  \mathbf{0}$ and $\mathbf{x}_{\mathbf{p} S^c}^{1, S}
 = \mathbf{0}$}}. To~calculate the values of basic variables, note that
\[
\mathbf{B}^S \begin{pmatrix} \mathbf{x}_{\mathbf{p} S}^{1} \\ \mathbf{x}_{\mathbf{p} S^c}^{0}
\end{pmatrix}  = \mathbf{p}, \enspace \textup{and hence} \enspace
\begin{pmatrix} \mathbf{x}_{\mathbf{p} S}^{1, S} \\ \mathbf{x}_{\mathbf{p} S^c}^{0, S}
\end{pmatrix} = \mathbf{H}^S \mathbf{p}.
\]
(b) \hl{Use part (a) with $\mathbf{p} = \mathbf{e}_j$ (the $j$th unit
 coordinate vector) to formulate resource metrics  as}
\[
G_j^S = \begin{pmatrix}
\mathbf{q}_S^1 & \mathbf{q}_{S^c}^0
\end{pmatrix} 
 \begin{pmatrix} \mathbf{x}_{j S}^{1, S} \\ \mathbf{x}_{j S^c}^{0, S}
\end{pmatrix} 
= \begin{pmatrix}
\mathbf{q}_S^1 & \mathbf{q}_{S^c}^0
\end{pmatrix} 
\mathbf{H}^S \mathbf{e}_j 
\Longrightarrow
\mathbf{G}^S = \begin{pmatrix}
\mathbf{q}_S^1 & \mathbf{q}_{S^c}^0
\end{pmatrix} 
\mathbf{H}^S.
\]
(c)  \hl{Proceed as in (b) to formulate reward metrics as}
\[
F_j^S = 
\begin{pmatrix}
\mathbf{r}_S^1 & \mathbf{r}_{S^c}^0
\end{pmatrix} 
 \begin{pmatrix} \mathbf{x}_{j S}^{1, S} \\ \mathbf{x}_{j S^c}^{0, S}
\end{pmatrix} 
= \begin{pmatrix}
\mathbf{r}_S^1 & \mathbf{r}_{S^c}^0
\end{pmatrix} 
\mathbf{H}^S \mathbf{e}_j 
\Longrightarrow
\mathbf{F}^S = \begin{pmatrix}
\mathbf{r}_S^1 & \mathbf{r}_{S^c}^0
\end{pmatrix} 
\mathbf{H}^S.
\]

(d) Represent
marginal resource metrics (cf.\ (\ref{eq:mwm})) as
\begin{equation}
\label{eq:wsc1}
\mathbf{g}_S^S = \mathbf{G}_S^S - \mathbf{q}_S^0 -
  \mathbf{G}^S \transp{(\boldsymbol{\Psi}^0_{S \mathcal{N}})}  \quad
  \text{ and } \quad
\mathbf{g}_{S^c}^S = \mathbf{q}_{S^c}^1 + \mathbf{G}^S
  \transp{(\boldsymbol{\Psi}_{S^c N}^1)}
 -  \mathbf{G}_{S^c}^S.
\end{equation}
Recast now the equalities in (\ref{eq:wsc1}), using (b), as~
\begin{align*}
\begin{pmatrix}
\mathbf{g}_S^S &
- \mathbf{g}_{S^c}^S
\end{pmatrix}
& = \mathbf{G}^S \mathbf{N}^S  - 
\begin{pmatrix} \mathbf{q}_S^0 & \mathbf{q}_{S^c}^1 \end{pmatrix}
= \begin{pmatrix}
\mathbf{q}_S^1 & \mathbf{q}_{S^c}^0
\end{pmatrix}
 \mathbf{H}^S \mathbf{N}^S
 - \begin{pmatrix} \mathbf{q}_S^0 & 
\mathbf{q}_{S^c}^1
\end{pmatrix}  
\\
& = 
\begin{pmatrix}
\mathbf{q}_S^1 & \mathbf{q}_{S^c}^0
\end{pmatrix}
\mathbf{A}^S  
 - \begin{pmatrix} \mathbf{q}_S^0
 & \mathbf{q}_{S^c}^1
\end{pmatrix}.
\end{align*}

(e) This part follows as part (d). 
\end{proof}

The following result gives the \emph{reduced costs} of the LP problem (\ref{eq:nuwplp})
in terms of the marginal resource and 
reward metrics in (\ref{eq:mwm}) and (\ref{eq:mrm}). 
It further  uses such a
result to represent such an LP problem's objective
in terms of  reduced~costs.

\begin{lemma}
\label{lma:rclp}
The reduced costs for
  non-basic variables corresponding to the 
$S$-active BFS for LP problem \textup{(\ref{eq:nuwplp})}
are given by
\begin{equation}
\label{eq:rcvplp}
\begin{pmatrix}
\mathbf{f}_S^S - \lambda \mathbf{g}_S^S &
- \mathbf{f}_{S^c}^S + \lambda \mathbf{g}_{S^c}^S
\end{pmatrix},
\end{equation}
and hence, the LP problem's objective can be
formulated as
\begin{equation}
\label{eq:rwdl}
\sum_{(j, a) \in \mathcal{N} \times \{0, 1\}} (r_j^a - \lambda q_j^a) x_j^a
= F_{\mathbf{p}}^S - \lambda G_{\mathbf{p}}^S - 
\sum_{j \in S} (f_j^S - \lambda g_j^S) x_j^0 +
 \sum_{j \in S^c} (f_j^S - \lambda g_j^S) x_j^1.
\end{equation}
\end{lemma}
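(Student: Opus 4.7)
The plan is to derive both parts of the lemma directly from the standard LP formulas for reduced costs and objective decomposition, plugging in the expressions already obtained in Lemma~\ref{lma:pmchar}.

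For part~(\ref{eq:rcvplp}), I would start from the general formula $\bar{c}_N = c_B^{\mathsf T} B^{-1} N - c_N$ (the $z_j-c_j$ convention for a maximization LP). From the decomposition set up in Section~\ref{s:bfsrc}, the basic and non-basic cost vectors for the $S$-active BFS read
\[
c_B = \begin{pmatrix} \mathbf{r}_S^1 - \lambda \mathbf{q}_S^1 & \mathbf{r}_{S^c}^0 - \lambda \mathbf{q}_{S^c}^0 \end{pmatrix},
\quad
c_N = \begin{pmatrix} \mathbf{r}_S^0 - \lambda \mathbf{q}_S^0 & \mathbf{r}_{S^c}^1 - \lambda \mathbf{q}_{S^c}^1 \end{pmatrix},
\]
while $B^{-1} N = (\mathbf{B}^S)^{-1} \mathbf{N}^S = \mathbf{H}^S \mathbf{N}^S = \mathbf{A}^S$ by~(\ref{eq:matdef}). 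Splitting the reduced-cost vector into its reward and resource contributions, I then invoke Lemma~\ref{lma:pmchar}(e) to rewrite $\begin{pmatrix}\mathbf{r}_S^1 & \mathbf{r}_{S^c}^0\end{pmatrix}\mathbf{A}^S - \begin{pmatrix}\mathbf{r}_S^0 & \mathbf{r}_{S^c}^1\end{pmatrix}$ as $\begin{pmatrix}\mathbf{f}_S^S & -\mathbf{f}_{S^c}^S\end{pmatrix}$, and Lemma~\ref{lma:pmchar}(d) to rewrite the analogous $\mathbf{q}$-combination as $\begin{pmatrix}\mathbf{g}_S^S & -\mathbf{g}_{S^c}^S\end{pmatrix}$. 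Combining these two identities with the coefficient $-\lambda$ yields precisely~(\ref{eq:rcvplp}).

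For the objective reformulation~(\ref{eq:rwdl}), I would use the standard identity
\[
c^{\mathsf T} x \;=\; c_B^{\mathsf T} B^{-1} b \;-\; \bar{c}_N^{\mathsf T} x_N,
\]
obtained by eliminating $x_B = B^{-1} b - B^{-1} N x_N$ from the objective. The constant term is computed via Lemma~\ref{lma:pmchar}(a), which gives $B^{-1} b = \mathbf{H}^S \mathbf{p}$; Lemma~\ref{lma:pmchar}(b),(c) then identifies
\[
c_B^{\mathsf T} \mathbf{H}^S \;=\; \begin{pmatrix}\mathbf{r}_S^1 & \mathbf{r}_{S^c}^0\end{pmatrix}\mathbf{H}^S - \lambda \begin{pmatrix}\mathbf{q}_S^1 & \mathbf{q}_{S^c}^0\end{pmatrix}\mathbf{H}^S \;=\; \mathbf{F}^S - \lambda \mathbf{G}^S,
\]
so that, after post-multiplying by $\mathbf{p}$ and invoking~(\ref{eq:fpigpip}), the constant becomes $F_{\mathbf{p}}^S - \lambda G_{\mathbf{p}}^S$. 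Finally, plugging the reduced-cost expression from part one into $-\bar{c}_N^{\mathsf T} x_N$ and expanding the block structure reproduces the two sums in~(\ref{eq:rwdl}), with the minus sign on $S$-indexed terms and the plus sign on $S^c$-indexed terms tracking the sign swap built into Lemma~\ref{lma:pmchar}(d),(e).

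No essential obstacle arises: the proof is effectively an exercise in substituting Lemma~\ref{lma:pmchar} into the textbook LP reduced-cost and objective decompositions. The only part requiring care is sign bookkeeping, since the block reformulation groups $(\mathbf{x}_S^1,\mathbf{x}_{S^c}^0)$ together as basic variables and $(\mathbf{x}_S^0,\mathbf{x}_{S^c}^1)$ as non-basic, which is why $\mathbf{f}^S$ and $\mathbf{g}^S$ enter with opposite signs on the $S$ and $S^c$ blocks. Once the signs are tracked consistently, both claimed identities fall out.
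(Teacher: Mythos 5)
Your proposal is correct and follows essentially the same route as the paper, whose proof consists of a single sentence invoking the standard LP reduced-cost formula together with Lemma~\ref{lma:pmchar}(d,e) and the standard objective decomposition in terms of the current BFS value and reduced costs. You have simply spelled out in detail the substitutions the paper leaves implicit, with the sign bookkeeping handled correctly.
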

\begin{proof}
The results follow 
from the well-known representation of reduced costs
in LP theory, as~given by Lemma \ref{lma:pmchar}(d,e),
along with the well-known representation of the LP problem's objective 
in terms of the value of the current BFS  and reduced costs. 
\end{proof}

The following result, which follows from Lemma
\ref{lma:rclp}, represents metrics
$G_{\mathbf{p}}^\pi$, $F_{\mathbf{p}}^\pi$ and objective $F_{\mathbf{p}}^{\pi} - \lambda G_{\mathbf{p}}^\pi$ 
in terms of their values under the $S$-active policy. These  \emph{decomposition
identities} were first obtained in (\cite{nmaap01}, Theorem 3) and 
(\cite{nmmp02}, Proposition 6.1) via ad~hoc~arguments.

\begin{lemma}
\label{lma:dlaws} For any policy $\pi \in \Pi$\textup{:}
\mbox{ }
\begin{itemize}[leftmargin=2.3em,labelsep=4mm]
\item[\textup{(a)}] \hl{$\displaystyle G_{\mathbf{p}}^\pi = G_{\mathbf{p}}^S - \sum_{j \in S} g_j^S x_{\mathbf{p} j}^{0, \pi} +
 \sum_{j \in S^c} g_j^S x_{\mathbf{p} j}^{1, \pi}$.}
\item[\textup{(b)}] \hl{$\displaystyle F_{\mathbf{p}}^\pi = F_{\mathbf{p}}^S - \sum_{j \in S} f_j^S x_{\mathbf{p} j}^{0,
 \pi} +
 \sum_{j \in S^c} f_j^S x_{\mathbf{p} j}^{1, \pi}$.}
\item[\textup{(c)}] \hl{$\displaystyle F_{\mathbf{p}}^{\pi} - \lambda G_{\mathbf{p}}^\pi 
= F_{\mathbf{p}}^S - \lambda G_{\mathbf{p}}^S - \sum_{j \in S} (f_j^S - \lambda g_j^S) x_{\mathbf{p} j}^{0, \pi} +
 \sum_{j \in S^c} (f_j^S - \lambda g_j^S) x_{\mathbf{p} j}^{1, \pi}$.}
\end{itemize}
\end{lemma}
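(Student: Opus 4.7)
The plan is to derive all three identities as direct consequences of Lemma~\ref{lma:rclp}. The key observation is that for any policy $\pi \in \Pi$, the vector of discounted state-action occupation measures $\mathbf{x}_{\mathbf{p}}^{\pi} = (\mathbf{x}_{\mathbf{p}}^{0, \pi}, \mathbf{x}_{\mathbf{p}}^{1, \pi})$ is feasible for the LP (\ref{eq:nuwplp}), since the flow-balance constraints there are nothing but the standard Bellman-type equations satisfied by discounted occupation measures (this is a well-known fact in SMDP theory and is what underlies the LP formulation itself). Hence $\mathbf{x}_{\mathbf{p}}^{\pi}$ is an admissible point in the LP (not necessarily a BFS), to which the reduced-cost identity may be applied.

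First I would prove part (c). By (\ref{eq:figilf}), the LP objective evaluated at $\mathbf{x}_{\mathbf{p}}^{\pi}$ equals $F_{\mathbf{p}}^{\pi} - \lambda G_{\mathbf{p}}^{\pi}$. On the other hand, Lemma~\ref{lma:rclp} gives a representation of the LP objective at any feasible point in terms of the value $F_{\mathbf{p}}^{S} - \lambda G_{\mathbf{p}}^{S}$ of the current $S$-active BFS plus the reduced costs (\ref{eq:rcvplp}) applied to the non-basic components of that point. The non-basic components of $\mathbf{x}_{\mathbf{p}}^{\pi}$ are $(\mathbf{x}_{\mathbf{p} S}^{0,\pi}, \mathbf{x}_{\mathbf{p} S^c}^{1,\pi})$. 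Substituting these into (\ref{eq:rwdl}) with $x_j^a$ replaced by $x_{\mathbf{p} j}^{a,\pi}$ yields (c) immediately.

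Next, parts (a) and (b) follow from (c) by exploiting that (c) is an identity in $\lambda$. Indeed, both sides of (c) are affine functions of $\lambda$: the quantities $F_{\mathbf{p}}^\pi$, $G_{\mathbf{p}}^\pi$, $F_{\mathbf{p}}^S$, $G_{\mathbf{p}}^S$, $f_j^S$, $g_j^S$, and the occupation measures $x_{\mathbf{p} j}^{a,\pi}$ do not depend on $\lambda$. Matching the constant term gives (b), and matching the coefficient of $-\lambda$ (equivalently, setting $\lambda = 0$ for (b) and differentiating in $\lambda$ for (a)) gives (a). Alternatively, one could re-run the argument of part (c) for the ``pure reward'' LP obtained by setting $q_j^a \equiv 0$ to get (b), and for the ``pure resource'' LP obtained by setting $r_j^a \equiv 0$ to get (a), exploiting the separate linear structure of each metric in Lemma~\ref{lma:pmchar}(b,c,d,e).

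There is no real obstacle here; the only subtlety worth flagging is the justification that $\mathbf{x}_{\mathbf{p}}^{\pi}$ is feasible for (\ref{eq:nuwplp}) for \emph{every} $\pi \in \Pi$, not merely for the stationary $S$-active policy from which the basis is derived. This rests on the standard identity that discounted state-action occupation measures under any randomized history-dependent policy satisfy $\sum_{a}(x_{\mathbf{p} j}^{a,\pi} - \sum_{i}\psi_{ij}^a x_{\mathbf{p} i}^{a,\pi}) = p_j$, which follows by unrolling the definition of $x_{\mathbf{p} j}^{a,\pi}$ one stage and using the strong Markov property at the decision epochs. Once this is in place, the rest is purely algebraic substitution into Lemma~\ref{lma:rclp}.
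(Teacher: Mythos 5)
Your proposal is correct and follows exactly the route the paper indicates: the paper gives no separate proof but states that Lemma~\ref{lma:dlaws} ``follows from Lemma~\ref{lma:rclp},'' i.e., from substituting the occupation measures of an arbitrary policy $\pi$ into the reduced-cost identity (\ref{eq:rwdl}) and then separating the reward and resource parts. Your explicit justification that $\mathbf{x}_{\mathbf{p}}^{\pi}$ is feasible for (\ref{eq:nuwplp}) for every $\pi \in \Pi$, and the coefficient-matching in $\lambda$ to extract (a) and (b) from (c), are exactly the details the paper leaves implicit.
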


The following result, first established in 
 (\cite{nmmp02},  Corollary 6.1), elucidates the relationship between 
resource and reward metrics and the corresponding marginal metrics. 
\begin{lemma}
\label{lma:marginterp}
\begin{itemize}[leftmargin=2.3em,labelsep=4mm]
\item[\textup{(a)}] For $j \in S^c$,
\hl{$G_{\mathbf{p}}^{S \cup \{j\}} = G_{\mathbf{p}}^S + g_j^S x_{\mathbf{p} j}^{1, S \cup \{j\}}$  \enspace and \enspace
$F_{\mathbf{p}}^{S \cup \{j\}} = F_{\mathbf{p}}^S + f_j^S x_{\mathbf{p} j}^{1, S \cup \{j\}}$.}
\item[\textup{(b)}] For $j \in S$,
\hl{$G_{\mathbf{p}}^{S \setminus \{j\}} = G_{\mathbf{p}}^S - g_j^S x_{\mathbf{p} j}^{1, S \setminus
  \{j\}}$ \enspace
and  \enspace $F_{\mathbf{p}}^{S \setminus \{j\}} = F_{\mathbf{p}}^S - f_j^S x_{\mathbf{p} j}^{1, S \setminus \{j\}}$.}
\end{itemize}
\end{lemma}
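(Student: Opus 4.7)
The plan is to derive both identities by specializing the decomposition identities of Lemma~\ref{lma:dlaws} to carefully chosen policies, and letting the structural zeros of occupation measures under stationary deterministic policies (Lemma~\ref{lma:pmchar}(a)) collapse each sum to a single surviving term. This keeps the argument purely algebraic; no new machinery beyond what is already established in the excerpt is required.

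For part (a), I would apply Lemma~\ref{lma:dlaws}(a,b) with $\pi$ taken to be the $(S \cup \{j\})$-active policy. The central observation is that, under any $T$-active stationary deterministic policy, Lemma~\ref{lma:pmchar}(a) yields $x_{\mathbf{p} i}^{0, T} = 0$ for every $i \in T$ and $x_{\mathbf{p} i}^{1, T} = 0$ for every $i \in T^c$. Taking $T = S \cup \{j\}$, every index $i \in S$ lies in $T$, so the first sum $\sum_{i \in S} g_i^S x_{\mathbf{p} i}^{0, S \cup \{j\}}$ vanishes entirely; in the second sum $\sum_{i \in S^c} g_i^S x_{\mathbf{p} i}^{1, S \cup \{j\}}$, only the single index $i = j$ belongs to $T$, so the sum collapses to $g_j^S x_{\mathbf{p} j}^{1, S \cup \{j\}}$. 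Plugging into Lemma~\ref{lma:dlaws}(a,b) gives exactly the stated expressions for $G_{\mathbf{p}}^{S \cup \{j\}}$ and $F_{\mathbf{p}}^{S \cup \{j\}}$.

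For part (b), I would analogously take $\pi$ to be the $(S \setminus \{j\})$-active policy, so $T = S \setminus \{j\}$. Now the roles reverse: every $i \in S^c$ lies in $T^c$, killing the second sum, while among $i \in S$ only $i = j$ lies in $T^c$, leaving a single surviving term in the first sum. This yields the stated subtractive formulas for $G_{\mathbf{p}}^{S \setminus \{j\}}$ and $F_{\mathbf{p}}^{S \setminus \{j\}}$, with the coefficients $g_j^S$ and $f_j^S$ (from the base set $S$) multiplied by the pivoted occupation measure at state $j$ under the $(S \setminus \{j\})$-active policy.

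There is no substantive obstacle here; the argument is essentially a bookkeeping exercise identifying which indices contribute non-zero terms. The one point that requires care is to notice that the only surviving summand in each case corresponds precisely to the unique state~$j$ whose action assignment differs between the $S$-active policy and the perturbed policy $(S \cup \{j\})$-active or $(S \setminus \{j\})$-active — the assertion is then exactly the statement that the change in $F_{\mathbf{p}}^{\cdot}$ and $G_{\mathbf{p}}^{\cdot}$ caused by toggling the action at state~$j$ equals the marginal rate $f_j^S$ (resp.~$g_j^S$) times the discounted occupation measure of the single pivoted $(j,a)$-stage.
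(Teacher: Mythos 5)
Your proposal is correct and is essentially the paper's own proof, which simply instructs the reader to substitute $\pi = S \cup \{j\}$ (resp.\ $\pi = S \setminus \{j\}$) into Lemma~\ref{lma:dlaws}; you have merely spelled out the occupation-measure bookkeeping via Lemma~\ref{lma:pmchar}(a) that makes all but one summand vanish. One remark worth noting: in part (b) the single surviving term is $-g_j^S x_{\mathbf{p} j}^{0, S\setminus\{j\}}$ (state $j$ is \emph{passive} under the $(S\setminus\{j\})$-active policy, so $x_{\mathbf{p} j}^{1, S\setminus\{j\}} = 0$ by Lemma~\ref{lma:pmchar}(a)); the superscript $1$ in the printed statement of part (b) appears to be a typo, and your derivation, carried through carefully, yields the corrected superscript $0$.
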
 
\begin{proof}
To obtain (a) use $\pi = S \cup \{j\}$
 in Lemma \ref{lma:dlaws} (a). Additionally,~similarly with (b) with  $\pi = S \setminus \{j\}$ and Lemma \ref{lma:dlaws} (b).
\end{proof}

\section{A Fast-Pivoting Index Algorithm for PCL$(\mathcal{F})$-Indexable Projects}
\label{s:fpia}
This section develops an efficient implementation of the
adaptive-greedy index algorithm above, focusing on its top-down version, for~a project that
 is PCL$(\mathcal{F})$-indexable.

We start by noting that the index of such a project
can be computed by deploying in the LP formulation (\ref{eq:nuwplp})
of the $\lambda$-price problem the classic \emph{parametric-objective
simplex algorithm}  in~\cite{gassSaaty55}.
In the present setting, 
the 
\emph{parametric simplex tableau}  for the 
$S$-active BFS is shown in Table~\ref{tab:invtabl}.
This tableau has basic variables 
 $\mathbf{x}_S^1$ and $\mathbf{x}_{S^c}^0$ in rows and non-basic
 variables $\mathbf{x}_S^0$ and $\mathbf{x}_{S^c}^1$ in columns, and~further,
has two  rows of reduced-costs for non-basic variables.
The tableau does not include right-hand sides or objectives, as they are
not required in this context.
The tableau is displayed just before
  \emph{pivoting} on $a_{jj}^S$, where 
$j \in S^c$. That is, \hl{just before} 
moving variable $x_j^0$ \emph{out} of the basis, and~
carrying $x_j^1$ \emph{into} the basis, \mbox{which corresponds} to changing
  from the $S$- to the $S \cup \{j\}$-active BFS.
After the pivot step, 
the updated tableau is shown in Table~\ref{tab:tabl2}.

One can readily use such tableaux to implement the 
top-down adaptive-greedy algorithm $\mathrm{AG}_{\mathcal{F}}^{\textup{TD}}$ in
Algorithm \textup{\ref{fig:agafref}}, by~first constructing the initial
tableau for the $\emptyset$-active policy, and~then
carrying out $n = |\mathcal{N}|$ pivot steps, at~each of which the
former BFS active set is augmented by a state. Such a
direct approach results in an implementation that we
call the 
\emph{Conventional-Pivoting Index} (CP) algorithm.
An immediate counting argument shows that the $n$ pivot steps of
that algorithm perform $2 n^3 + O(n^2)$ arithmetic operations---without considering
computation of the initial tableau, which will be addressed
\mbox{in Section~\ref{s:it} below}.

Note that this algorithm can also be applied to a restless bandit
 instance to test numerically whether it is
indexable. The~project will be indexable if, and only if, the successive pivot
steps, as~the price parameter $\lambda$ decreases from $\infty$ to $-\infty$ in the
parametric-objective simplex algorithm of~\cite{gassSaaty55}, \mbox{can be}
performed augmenting the current BFS by adding a state, thus
producing a nested active-set family $\mathcal{F}_0 = \{S_0, S_1,
\ldots, S_n\}$ with $S_0 = \emptyset \subset \cdots \subset S_n =
\mathcal{N}$. 

\begin{table}[H]
\caption{\hl{Simplex tableau (parametric) fo}r $S$-active
  BFS, with~pivot $a_{jj}^S$.}
\begin{center}
\begin{tabular}{cccc} 
& $\transp{(\mathbf{x}_S^0)}$ & $x_j^1$ & 
  $\transp{(\mathbf{x}_{S^c \setminus \{j\}}^1)}$ 
\\  \cmidrule{2-4} 
$\mathbf{x}_S^1$  & 
\multicolumn{1}{|c}{$\mathbf{A}_{SS}^S$} &
  $\mathbf{A}_{Sj}^S$
 & \multicolumn{1}{c|}{$\mathbf{A}_{S S^c \setminus \{j\}}^S$} \\
$x_j^0$  & 
\multicolumn{1}{|c}{$\mathbf{A}_{jS}^S$} & ${\boxed{a_{jj}^S}}$ &
\multicolumn{1}{c|}{$\mathbf{A}_{j S^c \setminus \{j\}}$} \\
$\mathbf{x}_{S^c \setminus \{j\}}^0$ & 
\multicolumn{1}{|c}{$\mathbf{A}_{S^c \setminus \{j\}, S}^S$} & 
$\mathbf{A}_{S^c \setminus \{j\}, j}^S$ & 
\multicolumn{1}{c|}{$\mathbf{A}_{S^c \setminus \{j\} S^c \setminus \{j\}}^S$}
\\ \cmidrule{2-4} 
& \multicolumn{1}{|c}{$\mathbf{g}_S^S$} & $-g_j^S$ & \multicolumn{1}{c|}{$- \mathbf{g}_{S^c \setminus \{j\}}^S$}
\\
& \multicolumn{1}{|c}{$\mathbf{f}_S^S$} & $-f_j^S$ &
\multicolumn{1}{c|}{$- \mathbf{f}_{S^c \setminus \{j\}}^S$}
\\ \cmidrule{2-4} 
\end{tabular}
\end{center}
\label{tab:invtabl}
\end{table}
\unskip

\begin{table}[H]
\caption{\hl{Tableau for} $S \cup \{j\}$-active  
  BFS, after~pivoting on $a_{jj}^S$.}
\begin{center}
\begin{tabular}{cccc} 
& $\transp{(\mathbf{x}_S^0)}$ & $x_j^0$ & 
  $\transp{(\mathbf{x}_{S^c \setminus \{j\}}^1)}$ 
\\  \cmidrule{2-4} 
$\mathbf{x}_S^1$ & 
\multicolumn{1}{|c}{$\displaystyle \mathbf{A}_{SS}^S - (a_{jj}^S)^{-1} \mathbf{A}_{Sj}^S
    \mathbf{A}_{jS}^S$} &
  $\displaystyle -(a_{jj}^S)^{-1} \mathbf{A}_{Sj}^S$
 & \multicolumn{1}{c|}{$\displaystyle \mathbf{A}_{S S^c \setminus \{j\}}^S - 
   (a_{jj}^S)^{-1} \mathbf{A}_{Sj}^S \mathbf{A}_{j S^c \setminus \{j\}}^S$}  \\
$x_j^1$ & 
\multicolumn{1}{|c}{$\displaystyle (a_{jj}^S)^{-1} \mathbf{A}_{jS}^S$} & $\displaystyle (a_{jj}^S)^{-1}$ & \multicolumn{1}{c|}{$\displaystyle (a_{jj}^S)^{-1} \mathbf{A}_{j S^c \setminus \{j\}}^S$} \\
$\mathbf{x}_{S^c \setminus \{j\}}^0$ & 
\multicolumn{1}{|c}{$\displaystyle \mathbf{A}_{S^c \setminus \{j\}, S}^S - 
 (a_{jj}^S)^{-1} \mathbf{A}_{S^c \setminus \{j\}, j}^S
   \mathbf{A}_{jS}^S$} & 
$\displaystyle - (a_{jj}^S)^{-1} \mathbf{A}_{S^c \setminus \{j\}, j}^S$ & 
\multicolumn{1}{c|}{$\displaystyle \mathbf{A}_{S^c \setminus \{j\} S^c \setminus \{j\}}^S - 
(a_{jj}^S)^{-1} \mathbf{A}_{S^c \setminus \{j\}, j}^S \mathbf{A}_{j S^c \setminus \{j\}}^S$} 
\\ \cmidrule{2-4} 
& \multicolumn{1}{|c}{$\displaystyle \mathbf{g}_S^S + 
 g_j^S (a_{jj}^S)^{-1} \mathbf{A}_{jS}^S$} & $\displaystyle g_j^S (a_{jj}^S)^{-1}$ & \multicolumn{1}{c|}{$\displaystyle - \mathbf{g}_{S^c \setminus \{j\}}^S + 
 g_j^S (a_{jj}^S)^{-1} \mathbf{A}_{j S^c \setminus \{j\}}^S$}
\\
& \multicolumn{1}{|c}{$\displaystyle \mathbf{f}_S^S + 
 f_j^S (a_{jj}^S)^{-1} \mathbf{A}_{jS}^S$} & $\displaystyle f_j^S (a_{jj}^S)^{-1}$ & \multicolumn{1}{c|}{$\displaystyle - \mathbf{f}_{S^c \setminus \{j\}}^S + 
 f_j^S (a_{jj}^S)^{-1} \mathbf{A}_{j S^c \setminus \{j\}}^S$}
\\ \cmidrule{2-4} 
\end{tabular}
\end{center}
\label{tab:tabl2}
\end{table}

We now consider how  to improve the efficiency of the CP algorithm, drawing on the observation that 
the tableau's rows
for basic variables $\mathbf{x}_S^1$ are not used
to update the reduced costs. 
Thus, it is enough to update and store the \emph{reduced tableaux}
shown in Table~\ref{tab:redtabl}.  
Table~\ref{tab:tabl2} demonstrates that a reduced tableau
can be updated without using the rows that have been deleted. 
The resulting simplification of the CP algorithm yields the 
\emph{Reduced-Pivoting} (RP) algorithm.
By an elementary counting argument, it is readily seen that the RP algorithm
carries out the $n$ pivot steps in $n^3 + O(n^2)$ 
operations, \mbox{thus
improving} by a factor of 2 the arithmetic operation complexity of
the CP~algorithm.

\begin{table}[H]
\caption{\hl{Reduced tableau} for $S$-active  
  BFS, with~pivot $a_{jj}^S$.}
\begin{center}
\begin{tabular}{cccc} 
& $\transp{(\mathbf{x}_S^0)}$ & $x_j^1$ & $\transp{(\mathbf{x}_{S^c \setminus \{j\}}^1)}$
\\  \cmidrule{2-4} 
$x_j^0$ & \multicolumn{1}{|c}{$\mathbf{A}_{jS}^S$} & ${\boxed{a_{jj}^S}}$ &
\multicolumn{1}{c|}{$\mathbf{A}_{j S^c \setminus \{j\}}^S$} \\
$\mathbf{x}_{S^c \setminus \{j\}}^0$ & \multicolumn{1}{|c}{$\mathbf{A}_{S^c \setminus
  \{j\}, S}^S$} & $\mathbf{A}_{S^c \setminus \{j\}, j}^S$ & \multicolumn{1}{c|}{$\mathbf{A}_{S^c \setminus \{j\} S^c \setminus \{j\}}^S$}
\\ \cmidrule{2-4} 
& \multicolumn{1}{|c}{$\mathbf{g}_S^S$} & $-g_j^S$ &
\multicolumn{1}{c|}{$- \mathbf{g}_{S^c \setminus \{j\}}^S$} \\
& \multicolumn{1}{|c}{$\mathbf{f}_S^S$} & $-f_j^S$ &
\multicolumn{1}{c|}{$- \mathbf{f}_{S^c \setminus \{j\}}^S$} \\ \cmidrule{2-4} 
\end{tabular}
\end{center}
\label{tab:redtabl}
\end{table}

We can exploit the 
assumption of PCL$(\mathcal{F})$-indexability to reduce even further the operation count, 
by updating and storing only the \emph{minimal tableaux} in
Table~\ref{tab:redtabl2}.
Such a tableau for 
the $S \cup \{j\}$-active BFS is computed from that for the
$S$-active BFS in Table~\ref{tab:redtabl2},
 as displayed in Table~\ref{tab:rtscj1}.
This results in the \emph{fast-pivoting $($FP$)$ adaptive-greedy} index
  algorithm $\mathrm{FP}_{\mathcal{F}}$  in
Algorithm~\ref{tab:fast}.

\begin{table}[H]
\caption{\hl{Minimal tableau} for $S$-active~BFS.}
\begin{center}
\begin{tabular}{cc} 
&  $\transp{(\mathbf{x}_{S^c}^1)}$ \\ \cline{2-2}
$\mathbf{x}_{S^c}^0$  & \multicolumn{1}{|c|}{$\mathbf{A}_{S^c S^c}^S$}
\\ \cmidrule{2-2}
& \multicolumn{1}{|c|}{$\mathbf{g}_{S^c}^S$} \\ 
& \multicolumn{1}{|c|}{$\mathbf{f}_{S^c}^S$}  \\ \cmidrule{2-2}
\end{tabular}
\end{center}
\label{tab:redtabl2}
\end{table}
\unskip

\begin{table}[H]
\caption{\hl{Minimal tableau} for $S \cup \{j\}$-active 
  BFS, obtained after pivoting on $a_{jj}^S$.}
\begin{center}
\begin{tabular}{cc} 
& $\transp{(\mathbf{x}_{S^c \setminus \{j\}}^1)}$
\\  \cmidrule{2-2}
$\mathbf{x}_{S^c \setminus \{j\}}^0$   & 
\multicolumn{1}{|c|}{$\displaystyle \mathbf{A}_{S^c \setminus \{j\} S^c \setminus \{j\}}^S - 
 (a_{jj}^S)^{-1} \mathbf{A}_{S^c \setminus \{j\}, j}^S
   \mathbf{A}_{j S^c \setminus \{j\}}^S$} 
   \\
\cmidrule{2-2}
& \multicolumn{1}{|c|}{$\displaystyle \mathbf{g}_{S^c \setminus \{j\}}^S -
 g_j^S (a_{jj}^S)^{-1} \mathbf{A}_{S^c \setminus \{j\}, j}^S$} \\
& \multicolumn{1}{|c|}{$\displaystyle \mathbf{f}_{S^c \setminus \{j\}}^S -
 f_j^S (a_{jj}^S)^{-1} \mathbf{A}_{S^c \setminus \{j\}, j}^S$} \\  \cline{2-2}
\end{tabular}
\end{center}
\label{tab:rtscj1}
\end{table}
\unskip

\begin{algorithm}[H]
\caption{\hl{The fast-pivoting adaptive-greedy index algorithm} $\mathrm{FP}_{\mathcal{F}}$.} 
\begin{center}
\fbox{%
\begin{minipage}{\textwidth}
\textbf{Output:}
$\{j_k, \lambda^*_{j_k}\}_{k=1}^{n}$
\begin{tabbing}
\textbf{solve }
$
\mathbf{A}^{(0)}
\begin{pmatrix}
\mathbf{I}_{\mathcal{N}, \mathcal{N} \setminus \{j^*\}}
 -  \boldsymbol{\Psi}_{\mathcal{N}, \mathcal{N} \setminus \{j^*\}}^0 & 
\widetilde{\mathbf{m}}^{0}
\end{pmatrix}
= \begin{pmatrix}
\mathbf{I}_{ \mathcal{N},  \mathcal{N}\setminus \{j^*\}} -  
\boldsymbol{\Psi}_{ \mathcal{N},  \mathcal{N}\setminus \{j^*\}}^1 & 
\widetilde{\mathbf{m}}^{1}
\end{pmatrix}
$
\\
$\begin{pmatrix} \mathbf{g}^{(0)} \\
  \mathbf{f}^{(0)} \end{pmatrix} := 
\begin{pmatrix}
  \mathbf{q}^1 \\ \mathbf{r}^1 \end{pmatrix} -
  \begin{pmatrix}
 \mathbf{q}^0 \\ \mathbf{r}^0 \end{pmatrix} \mathbf{A}^{(0)}$; \, 
 $S_0 := \emptyset$
\end{tabbing}

\begin{tabbing}
\textbf{for} \= $k := 1$   \textbf{ to } $n$  {\bf do} \\
 \> 
 $\lambda_j^{(k-1)} := f_j^{(k-1)}/g_j^{(k-1)}, \quad j
\in \partial^{\textup{out}}_{\mathcal{F}} S_{k-1}$ \\
 \> \textbf{pick} 
 $\displaystyle j_{k} \in \argmax
      \{\lambda^{({k-1})}_i\colon
                j \in \partial^{\textup{out}}_{\mathcal{F}}
 S_{k-1}\}$; \, $\displaystyle \lambda_{j_k}^* := 
 \lambda^{({k-1})}_{j_k}$; \, $\displaystyle S_{k} := S_{k-1} \cup \{j_{k}\}$  \\
 \> \textbf{if} \= $k < n$ \textbf{then} \\
\>  \> $\mathbf{A}_{S_{k}^c j_k}^{({k})} := (1/ a_{j_k j_k}^{({k-1})})
 \mathbf{A}_{S_{k}^c j_k}^{({k-1})}$; \,
$\displaystyle \mathbf{A}_{S_k^c S_{k}^c}^{(k)}
 := \mathbf{A}_{S_k^c S_{k}^c}^{({k-1})} - \mathbf{A}_{S_{k}^c 
 j_k}^{({k})} \mathbf{A}_{j_k S_{k}^c}^{({k-1})}$
\\
 \>  \textbf{end } \{ if \} \\
 \>  $\displaystyle \mathbf{g}_{S_k^c}^{(k)} := \mathbf{g}_{S_k^c}^{({k-1})} - 
  g_{j_k}^{({k-1})} \mathbf{A}_{S_k^c j_k}^{({k})}$; \, $\displaystyle \mathbf{f}_{S_k^c}^{(k)} := \mathbf{f}_{S_k^c}^{({k-1})} - 
  f_{j_k}^{({k-1})} \mathbf{A}_{S_k^c j_k}^{({k})}$ \\
\textbf{end} \{ for \}
\end{tabbing}
\end{minipage}}
\end{center}
\label{tab:fast}
\end{algorithm}

The next result evaluates the  operation count of
algorithm $\mathrm{FP}_{\mathcal{F}}$, showing that it outperforms significantly
that of algorithm RP. Note that the complexity of its
loop---which performs the $n$ pivot steps---matches that of Gaussian elimination for solving an $n \times n$ system of linear equations, and~is hence unlikely that such complexity can be improved for general restless~bandits.

\begin{proposition}
\label{pro:aga}
The loop of algorithm $\mathrm{FP}_{\mathcal{F}}$ entails $(2/3)
n^3 + O(n^2)$ arithmetic operations.
\end{proposition}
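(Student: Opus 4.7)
The plan is to analyze the loop of $\mathrm{FP}_{\mathcal{F}}$ iteration by iteration, identify the rank-1 update of the matrix $\mathbf{A}_{S_k^c S_k^c}^{(k)}$ as the dominant cost, and then sum the per-iteration counts. At iteration $k \in \{1, \ldots, n\}$, note that $|S_{k-1}^c| = n-k+1$ and $|S_k^c| = n-k$, so the sizes of the objects being manipulated shrink by one at each step, and the arithmetic counts will be in terms of $(n-k)$.

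First I would dispose of all the $O(n-k)$ contributions per iteration: computing the candidate ratios $\lambda_j^{(k-1)} = f_j^{(k-1)}/g_j^{(k-1)}$ over $j \in \partial^{\textup{out}}_{\mathcal{F}} S_{k-1}$ costs at most $n-k+1$ divisions, selecting the argmax costs the same, scaling the column $\mathbf{A}_{S_k^c j_k}^{(k)} := (1/a_{j_k j_k}^{(k-1)}) \mathbf{A}_{S_k^c j_k}^{(k-1)}$ uses one division and $n-k$ multiplications, and each of the vector updates for $\mathbf{g}_{S_k^c}^{(k)}$ and $\mathbf{f}_{S_k^c}^{(k)}$ uses $n-k$ multiplications and $n-k$ subtractions. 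Summed over $k = 1, \ldots, n$, these contribute $O(n^2)$ in total.

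Second, I would evaluate the dominant step. The update
\[
\mathbf{A}_{S_k^c S_k^c}^{(k)} := \mathbf{A}_{S_k^c S_k^c}^{(k-1)} - \mathbf{A}_{S_k^c j_k}^{(k)} \mathbf{A}_{j_k S_k^c}^{(k-1)}
\]
requires forming a rank-one outer product of two vectors of length $n-k$ and subtracting the result entrywise from an $(n-k) \times (n-k)$ matrix, costing $(n-k)^2$ multiplications plus $(n-k)^2$ subtractions, i.e.\ exactly $2(n-k)^2$ arithmetic operations. Because the algorithm guards this update with the conditional $k < n$, we sum only for $k = 1, \ldots, n-1$:
\[
\sum_{k=1}^{n-1} 2(n-k)^2 \;=\; 2 \sum_{j=1}^{n-1} j^2 \;=\; 2 \cdot \frac{(n-1)n(2n-1)}{6} \;=\; \frac{2}{3} n^3 + O(n^2).
\]
Combining with the $O(n^2)$ contribution from the remaining work of the loop yields the claimed $(2/3) n^3 + O(n^2)$ bound.

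There is no deep obstacle here: the result is elementary bookkeeping. The only thing to watch is that the dominant step operates on the \emph{minimal} tableau of Table~\ref{tab:redtabl2} (i.e.\ only the $\mathbf{A}_{S^c S^c}^S$ block, not the full or reduced tableau), which is precisely what halves the leading coefficient relative to the RP algorithm's $n^3 + O(n^2)$ and thirds it relative to the CP algorithm's $2n^3 + O(n^2)$. One must also verify that no hidden work has been omitted from the count — in particular, that row $\mathbf{A}_{j_k S_k^c}^{(k-1)}$ needed in the outer product is already available from the tableau of the previous iteration (it is a row of $\mathbf{A}_{S_{k-1}^c S_{k-1}^c}^{(k-1)}$), and that the index rates $\lambda_j^{(k-1)}$ only need to be recomputed for the current outer boundary rather than for all of $S_{k-1}^c$.
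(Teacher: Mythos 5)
Your proposal is correct and follows the same argument as the paper's proof: the dominant cost is the rank-one update of $\mathbf{A}_{S_k^c S_k^c}$ at $2(n-k)^2$ operations per step, which sums to $(2/3)n^3 + O(n^2)$, with all remaining per-step work being $O(n)$. Your version merely spells out the bookkeeping that the paper leaves implicit.
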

\begin{proof}
The operation count in the loop is dominated by the update of matrix $\mathbf{A}_{S_k^c
  S_k^c}^{S_k}$ at each step $k$, taking $2 (n-k)^2$ arithmetic
operations.  This yields the total  operation count as stated.
\end{proof}

\subsection{Computing the Initial~Tableau}
\label{s:it}
We next address computation of the 
initial tableau, which corresponds to the 
$\emptyset$-active BFS, in~a form that is numerically stable and that applies both to the discounted
  and the average criterion to be addressed in
Section~\ref{s:rac}. The~ tableaux  for the average criterion arise as limits letting the
discount rate $\alpha$ vanish in the discounted~tableaux. 

Note that (cf.\ (\ref{eq:matdef}))
\begin{equation}
\label{eq:mats0}
\mathbf{B}^\emptyset = \transp{(\mathbf{I} -
  \boldsymbol{\Psi}^0)}, 
\quad
\mathbf{N}^\emptyset = 
\transp{(\mathbf{I} -  \boldsymbol{\Psi}^1)}, \quad
\mathbf{H}^\emptyset = (\mathbf{B}^\emptyset)^{-1}, \quad
\mathbf{A}^\emptyset =  \mathbf{H}^\emptyset \mathbf{N}^\emptyset.
\end{equation}
Thus, a~straightforward approach to computing $\mathbf{A}^\emptyset$ is to solve the linear equation system
\begin{equation}
\label{eq:aemples}
\transp{(\mathbf{A}^\emptyset)} (\mathbf{I} -  \boldsymbol{\Psi}^0)
  = (\mathbf{I} -  \boldsymbol{\Psi}^1).
\end{equation}
However, this has a disadvantage: as $\alpha$ vanishes, the~ matrices $\mathbf{I} - \boldsymbol{\Psi}^a$ become increasingly
 ill-conditioned, as~they are singular for $\alpha = 0$---because they converge to
 $\mathbf{I} - \mathbf{P}^a$ where $\mathbf{P}^a \triangleq
 (p_{ij}^a)$.

To overcome such a drawback, we draw on the identity 
$(\mathbf{I} - \boldsymbol{\Psi}^a) \mathbf{1}
= \mathbf{1} - \boldsymbol{\Psi}^a$, 
which is a consequence from (\ref{eq:phiialst}).
From this and (\ref{eq:mats0}), we have 
\[
\transp{(\mathbf{A}^\emptyset)} (\mathbf{1} - \boldsymbol{\Psi}^0) = 
\mathbf{1} - \boldsymbol{\Psi}^1.
\]
The latter identity  gives a useful
counterpart as $\alpha \searrow 0$.
 Thus, writing as $\xi_i^{a}$ the length of an $(i, a)$-stage (cf.\
 Section~\ref{s:dtr}), and~applying the MacLaurin series 
\[
\psi_{i}^{a} = \Ex\big[e^{-\alpha \xi_i^{a}}\big] = 1 -
\alpha m_i^{a} + O(\alpha^2) \quad \text{as }
\alpha \searrow 0,
\]
one obtains in the limit 
\[
\transp{(\mathbf{A}^\emptyset)} \mathbf{m}^{0} = 
\mathbf{m}^{1},
\]
with $m_i^a$  the mean
length of an $(i, a)$-stage and $\mathbf{m} = (m_i^a)_{i \in \mathcal{N}}$.

We thus obtain the following approach to
computing the initial tableau, for~$\alpha \geqslant 0$.
Letting
\[
\widetilde{m}_i^{a} \triangleq
\begin{cases}
m_i^{a} & \text{ if } \alpha = 0 \\
(1-\psi_{i}^{a})/\alpha & \text{ if } \alpha > 0,
\end{cases}
\]
and $\widetilde{\mathbf{m}}^a = (\widetilde{m}_i^{a})_{i \in \mathcal{N}}$,
pick any state $j^* \in \mathcal{N}$ and solve the 
\emph{(block) linear equation system}
\begin{equation}
\label{eq:neweqs}
\transp{\begin{pmatrix}
\mathbf{I}_{ \mathcal{N},  \mathcal{N}\setminus \{j^*\}}
 -  \boldsymbol{\Psi}_{ \mathcal{N},  \mathcal{N}\setminus \{j^*\}}^0 & 
\widetilde{\mathbf{m}}^{0}
\end{pmatrix}} \mathbf{A}^\emptyset
= 
\transp{\begin{pmatrix}
\mathbf{I}_{ \mathcal{N},  \mathcal{N}\setminus \{j^*\}} -  
\boldsymbol{\Psi}_{ \mathcal{N},  \mathcal{N}\setminus \{j^*\}}^1 & 
\widetilde{\mathbf{m}}^{1}
\end{pmatrix}}
\end{equation}
to obtain $\mathbf{A}^\emptyset$.
Then, calculate initial reduced costs using (\ref{eq:mats0}) and Lemma \ref{lma:pmchar} (d, e) as
\begin{equation}
\label{eq:initrc}
\begin{split}
\mathbf{g}^\emptyset & = \mathbf{q}^1 - 
  \mathbf{q}^0  \mathbf{A}^\emptyset \\
\mathbf{f}^\emptyset & = \mathbf{r}^1 - \mathbf{r}^0 \mathbf{A}^\emptyset.
\end{split}
\end{equation}

\section{Extension to the Average~Criterion}
\label{s:rac}
In applications where the (long-run)
average criterion is employed, one must consider the version of
$\lambda$-price problem (\ref{eq:nuwpro})  based on the average reward and resource metrics given by
\begin{equation}
\label{eq:avfipi}
F_i^\pi 
\triangleq \liminf_{T \nearrow \infty} \frac{1}{T}
\Ex_i^\pi\left[\int_{0}^T R_{X(t)}^{A(t)} \, dt\right] = \liminf_{K
  \nearrow \infty} \frac{1}{K} \Ex_i^\pi\left[\sum_{k=0}^K r_{X_k}^{a_k} \right],
\end{equation}
and
\begin{equation}
\label{eq:avgipi}
G_i^\pi 
\triangleq \limsup_{T \nearrow \infty} \frac{1}{T} \Ex_i^\pi\left[\int_{0}^T Q_{X(t)}^{A(t)} 
   \, dt\right] = \limsup_{K
  \nearrow \infty} \frac{1}{K}  \Ex_i^\pi\left[\sum_{k=0}^K
  q_{X_k}^{a_k}\right].
\end{equation}

As in (\cite{nmmp02}, Section~6.5), we assume that the
embedded process $X_k$ is \emph{communicating}, so each state can
be reached from every other state under some stationary policy. 
Under this assumption, the above metrics do not depend on
the initial state under stationary deterministic policies, so one can
write $F^S$ and $G^S$ for active sets $S \subseteq \mathcal{N}$. This allows a
straightforward extension of the above indexability theory  to the average~criterion.

Regarding the  algorithms discussed above, they apply without change to the
average criterion, as~the results presented in Section~\ref{s:it} show that the
corresponding tableaux are simply the limits of the
discounted tableaux as the discount rate goes to zero, and~
also outline how to evaluate the initial tableau.
To properly extend the results, one must
further assume that the active-set family $\mathcal{F}$ satisfies that, for~every $S \in \mathcal{F}$, the~$S$-active
policy is \emph{unichain}, so it induces  a single recurrent class on the embedded process
$X_k$ plus a class of
transient states, which may be~empty.

\section{Numerical~Experiments}
\label{s:ce}
This section discusses results of numerical experiments,
based on implementations by the author of the aforementioned~algorithms.

\subsection*{Comparing Runtimes of Index~Algorithms}
\label{s:rcia}
The runtime performance of an algorithm depends not only on its arithmetic operation count, but~also on its memory-access patterns, which can actually be the dominant
factor.
Hence, to~compare the  algorithms considered herein,  a~numerical study has been conducted, using MATLAB implementations
  developed by the author. 
The
experiments were run on a PC with an Intel Core \mbox{i7-8700 CPU} at 3.2 GHz with 16 GB of RAM
using MATLAB 2020b under Windows 10 Enterprise.
\mbox{For the} state space sizes $n = 1000, 2000, \ldots,$ 15,000,
 a   discrete-time restless bandit instance 
was randomly constructed.
Transition matrices were generated from random matrices with 
Uniform$[0, 1]$ entries, dividing each row by its sum. 
Active rewards were randomly generated with Uniform$[0, 1]$ entries,
and passive rewards were zero.
The discount factor was $\beta = 0.8$.

For each generated instance, 
the CP algorithm was first used to test for
indexability and for PCL-indexability (by checking the signs of
marginal resource metrics for the  nested active-set
family obtained).
Since such tests were positive in each instance,
the Whittle index was calculated using the CP, RP, and
FP algorithms (taking $\mathcal{F} = 2 ^\mathcal{N}$ in the latter). 

Table~\ref{tab:numex1} records the runtimes for the loop of each algorithm, without~counting the initialization stage of computing the initial tableau, while 
Figure~\ref{fig:rclsf} plots them, along with cubic least-squares fitting curves.
These results highlight that the FP algorithm, whose loop operation count is of
$(2/3) n^3 + O(n^2)$, is indeed the fastest algorithm, followed by the CP and RP algorithms.
Recall that $2 n^3+ O(n^2)$ and $n^3+ O(n^2)$ are the loop operation counts for the CP and the
RP~algorithms.

 \begin{table}[H]
\caption{Runtimes (sec.) of index~algorithms.} \label{tab:numex1}
\centering
\begin{tabular}{rrrr}
\toprule
\multicolumn{1}{c}{$n$} & \multicolumn{1}{c}{FP}  &  \multicolumn{1}{c}{RP} &  \multicolumn{1}{c}{CP} \\ \midrule
$1000$   &        $1.8$ &       $2.6$ &        $2.2$ \\
$2000$  &       $14.8$ &       $23.6$ &      $19.6$ \\
$3000$  &   $53.3$ &      $77.1$  &    $67.8$ \\
$4000$   &   $121.4$  & $193.4$ &    $156.3$\\
$5000$  &   $227.6$ &   $342.8$ &  $295.9$ \\
$6000$  &   $433.2$  & $647.9$ &    $541.8$ \\
$7000$ &     $699.6$  & $1034.8$ &  $862.3$ \\
$8000$ &   $1118.3$   &  $1531.5$ &  $1280.9$ \\
$9000$ &    $1530.8$  & $2173.6$  &  $1822.3$ \\
10,000 & $2100.7$   &  $2919.3$  &  $2500.9$ \\
11,000 &  $2687.2$  & $3580.0$   &  $3277.7$ \\
12,000   &  $3575.9$  & $5055.7$   &  $4300.6$ \\
13,000 &  $4747.4$ &  $6610.4$ &  $5539.2$ \\
14,000 &  $5629.5$ &   $7923.1$ &  $6829.9$ \\
15,000 &  $7254.1$ &   $8871.0$ &  $8250.5$
\\ \bottomrule
\end{tabular}
\end{table}
\unskip

 \vspace{-12pt}
\begin{figure}[H]
\centering
\includegraphics[height=3in,width=\textwidth,keepaspectratio]{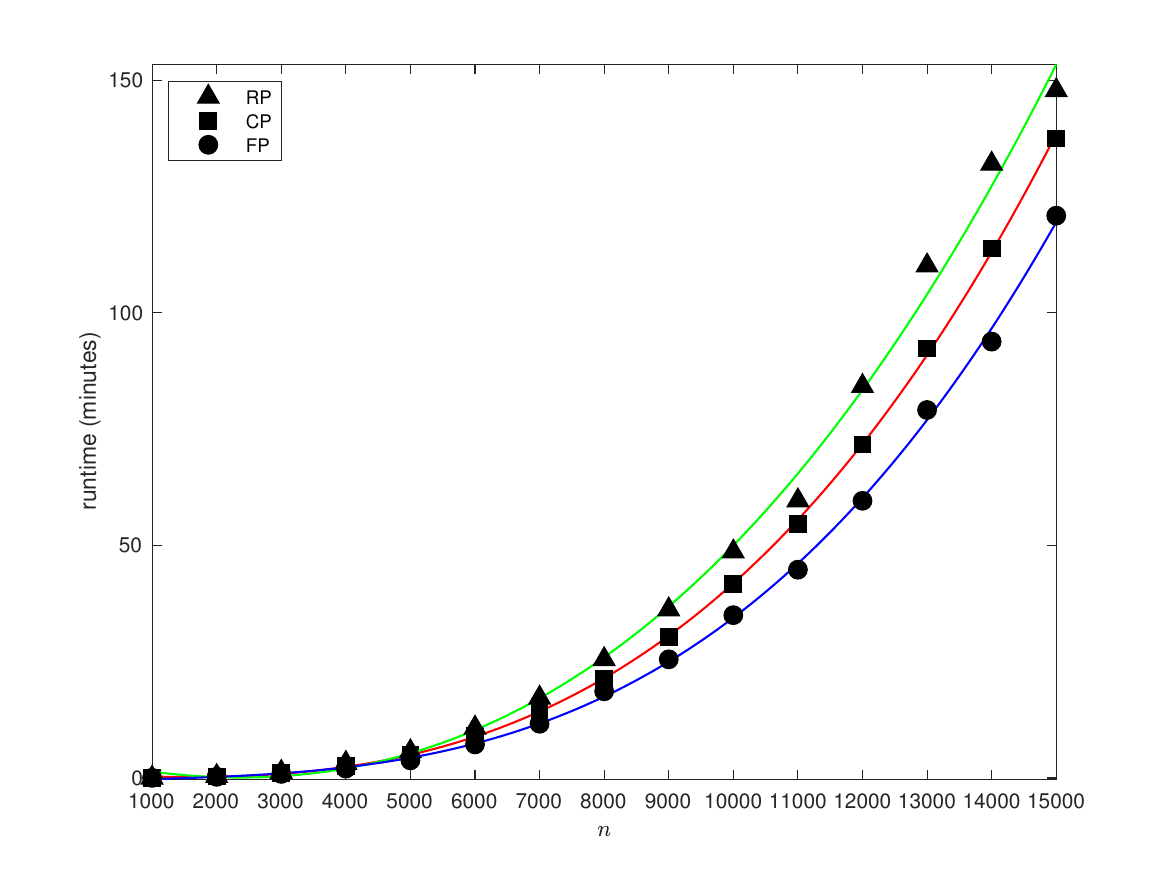}%
\vspace{-12pt}

\caption{Runtimes of index algorithms vs.\ number of project states with cubic least-squares~fits.}
\label{fig:rclsf}
\end{figure}

The observed discrepancies between operation count complexity and actual runtime performance are \hl{explained} by taking into account
 the memory-access patterns of the algorithms.
Algorithm CP, which carries out conventional pivot steps, has
 efficient memory-access patterns, because the
coefficient matrix $\mathbf{A}$ is always updated as a memory block of contiguous storage.
Yet, both algorithms RP and
FP  achieve a reduction in operations by
operating on submatrices of $\mathbf{A}$,  with~resulting noncontiguous memory-access
patterns that are time-consuming. 
However, for~the FP algorithm, as~the decrease in arithmetic
operations is substantial, it compensates such inefficiencies,  and~comes out in practice as the 
fastest~algorithm. 

\section{Discussion}
\label{s:disc}
This paper has presented a new algorithm for computing the Whittle index of a general finite-state semi-Markov restless bandit, based on an efficient implementation of the adaptive-greedy algorithmic scheme introduced in~\cite{nmaap01,nmmp02} for restless bandits, in~which it was not specified how to evaluate certain metrics arising in the algorithm description. 
The algorithm extends to restless bandits the fast-pivoting implementation developed in~\cite{nmijoc07} for classic (non-restless) bandits, and~results from a similar approach, exploiting the structure of parametric simplex tableaux to reduce the operation count down to $(2/3) n^3 + O(n^2)$---apart from the computational effort to compute the initial tableau. 
It is unlikely that such a complexity count can be improved for general restless bandits, as~$(2/3) n^3 + O(n^2)$ is the complexity of Gaussian elimination for solving an $n \times n$ linear equation system, and~it is shown in~\cite{nmaap01,nmmp02} that computing the Whittle index entails at least solving an equation system with such dimensions (\mbox{albeit with} an ordering of the states that generally is not known in advance).

The complexity of the proposed fast-pivoting algorithm is the best that has been reported in the literature. In~contrast, the~Whittle index algorithm recently presented in~\cite{ayestetal20}---for  the average criterion---has a complexity of $O(n^4 2^n)$, which reduces to $O(n^5)$ for \hl{one-dimensional} 
indexable bandits provided it is known that 
 threshold policies are~optimal \hl{for them}. 
 
 The new algorithm presented herein, whose implementation is straightforward, will be most useful for computing the Whittle index in complex models with large-scale, multi-dimensional state spaces for which closed index formulae cannot be derived, and~an efficient computational approach is needed.
 Given the explosion of research interest in restless bandit models in the last decade, the~proposed algorithm thus has the potential of becoming a useful tool, allowing researchers to expand the scope of Whittle's index policy to large-scale complex~models.

Future research directions include developing efficient implementations with substantially lower complexity by exploiting the special structure of relevant model classes arising in applications, and~testing the algorithm in large-scale real world models with real data.
Another avenue of research is to develop software implementations that improve the efficiency of the computationally costly block matrix operations required by the fast-pivoting~algorithm.

\section{Conclusions}
\label{s:concl}
To conclude, the~findings of this paper can be summarized as~follows:
\begin{itemize}[leftmargin=2.2em,labelsep=5.5mm]
\item[-] A new algorithm to compute the Whittle index of a general $n$-state semi-Markov restless bandit is presented, which can also be used to test numerically for indexability. After~an initialization step, the~algorithm computes the $n$ index values in an $n$-step loop with a complexity of $(2/3) n^3 + O(n^2)$ arithmetic operations.
\item[-] The algorithm extends to Whittle's index the fast-pivoting $(2/3) n^3 + O(n^2)$ algorithm introduced by the author in~\cite{nmijoc08}  \hl{for the Gittins  index of classic (non-restless) banditss}, which also has the lowest \mbox{complexity to date}. 
\item[-] The proposed algorithm has substantially better complexity than alternative algorithms proposed in the literature.
\item[-] The algorithm will be especially useful for computing the Whittle index in large-scale multi-dimensional models where the index cannot be derived in closed form and alternative algorithms will result in prohibitive computation times.
\end{itemize}



\section*{Funding}
This research has been developed over a number of years, and~has been funded by the Spanish Government under grants MEC MTM2004-02334 and  PID2019-109196GB-I00/AEI/10.13039/501100011033.

\section*{Acknowledgments}
Preliminary early versions of this work were published by the author in the abridged proceedings~\cite{jnmsmct07b} of the Second International Workshop on Tools for Solving Structured
Markov Chains (\mbox{SMCtools 2007}), Nantes,
France, 2007, and~in the working paper~\cite{jnmsmct07a}.


\end{document}